\begin{document}
\setlength{\baselineskip}{16pt}

\parindent 0.5cm
\evensidemargin 0cm \oddsidemargin 0cm \topmargin 0cm \textheight
22cm \textwidth 16cm \footskip 2cm \headsep 0cm

\newtheorem{theorem}{Theorem}[section]
\newtheorem{lemma}[theorem]{Lemma}
\newtheorem{proposition}[theorem]{Proposition}
\newtheorem{definition}{Definition}[section]
\newtheorem{example}{Example}[section]
\newtheorem{corollary}[theorem]{Corollary}

\newtheorem{remark}{Remark}[section]
\newtheorem{property}[theorem]{Property}
\numberwithin{equation}{section}
\newtheorem{mainthm}{Theorem}
\newtheorem{mainlem}{Lemma}

\numberwithin{equation}{section}

\def\p{\partial}
\def\I{\textit}
\def\R{\mathbb R}
\def\C{\mathbb C}
\def\u{\underline}
\def\l{\lambda}
\def\a{\alpha}
\def\O{\Omega}
\def\e{\epsilon}
\def\ls{\lambda^*}
\def\D{\displaystyle}
\def\wyx{ \frac{w(y,t)}{w(x,t)}}
\def\imp{\Rightarrow}
\def\tE{\tilde E}
\def\tX{\tilde X}
\def\tH{\tilde H}
\def\tu{\tilde u}
\def\d{\mathcal D}
\def\aa{\mathcal A}
\def\DH{\mathcal D(\tH)}
\def\bE{\bar E}
\def\bH{\bar H}
\def\M{\mathcal M}
\renewcommand{\labelenumi}{(\arabic{enumi})}

\def\disp{\displaystyle}
\def\undertex#1{$\underline{\hbox{#1}}$}
\def\card{\mathop{\hbox{card}}}
\def\sgn{\mathop{\hbox{sgn}}}
\def\exp{\mathop{\hbox{exp}}}
\def\OFP{(\Omega,{\cal F},\PP)}
\newcommand\JM{Mierczy\'nski}
\newcommand\RR{\ensuremath{\mathbb{R}}}
\newcommand\CC{\ensuremath{\mathbb{C}}}
\newcommand\QQ{\ensuremath{\mathbb{Q}}}
\newcommand\ZZ{\ensuremath{\mathbb{Z}}}
\newcommand\NN{\ensuremath{\mathbb{N}}}
\newcommand\PP{\ensuremath{\mathbb{P}}}
\newcommand\abs[1]{\ensuremath{\lvert#1\rvert}}

\newcommand\normf[1]{\ensuremath{\lVert#1\rVert_{f}}}
\newcommand\normfRb[1]{\ensuremath{\lVert#1\rVert_{f,R_b}}}
\newcommand\normfRbone[1]{\ensuremath{\lVert#1\rVert_{f, R_{b_1}}}}
\newcommand\normfRbtwo[1]{\ensuremath{\lVert#1\rVert_{f,R_{b_2}}}}
\newcommand\normtwo[1]{\ensuremath{\lVert#1\rVert_{2}}}
\newcommand\norminfty[1]{\ensuremath{\lVert#1\rVert_{\infty}}}

\title{Almost automorphically forced flows on $S^1$ or $\mathbb{R}$  in one-dimensional almost periodic semilinear heat equations}

\author {
\\
Wenxian Shen\thanks{Partially supported by the NSF grant DMS--1645673.}\\
Department of Mathematics and Statistics\\
 Auburn University, Auburn, AL 36849, USA
\\
\\
Yi Wang\thanks{Partially supported by NSF of China No.11825106, 11771414, CAS Wu Wen-Tsun Key Laboratory of Mathematics, { University of Science and Technology of China}.} \\
School of Mathematical Science\\
 University of Science and Technology of China
\\ Hefei, Anhui, 230026, P. R. China
\\
\\
 Dun Zhou\thanks{Corresponding author, email address: zhoudun@njust.edu.cn. Partially supported by NSF of China No.11971232, 11601498 and the Fundamental Research Funds for the Central Universities No.30918011339.}\\
 School of Science
 \\ Nanjing University of Science and Technology
 \\ Nanjing, Jiangsu, 210094, P. R. China
 \\
\\
}
\date{}

\maketitle

\begin{abstract}
In this paper, we consider the asymptotic dynamics of the skew-product semiflow generated by the following {time almost-periodically forced} scalar reaction-diffusion equation
\begin{equation}
\label{eq0}
u_{t}=u_{xx}+f(t,u,u_{x}),\,\,t>0,\, 0<x<L
\end{equation}
with periodic boundary condition
\begin{equation}
\label{bdc1}
u(t,0)=u(t,L),\quad u_x(t,0)=u_x(t,L),
\end{equation}
where $f$ is uniformly almost periodic in $t$. In particular, we study the topological structure of the limit sets of { the skew-product semiflow. It is proved that
 any compact minimal invariant set (throughout this paper, we refer to it as a minimal set) can be residually embedded into an invariant set of some almost automorphically-forced flow on a circle $S^1=\mathbb{R}/L\mathbb{Z}$. Particularly, if $f(t,u,p)=f(t,u,-p)$, then the flow on a minimal set topologically conjugates to an almost periodically-forced minimal flow on $\mathbb{R}$}. Moreover, it is proved that the $\omega$-limit set of any bounded orbit contains at most two minimal sets which cannot be obtained from each other by phase translation.

 In addition, we further consider the asymptotic dynamics of the skew-product semiflow generated by \eqref{eq0} with Neumann boundary condition
 \begin{equation*}
 \label{bcd2}
 u_x(t,0)=u_x(t,L)=0,
 \end{equation*}
 or Dirichlet boundary condition
 \begin{equation*}
 \label{bdc3}
 u(t,0)=u(t,L)=0.
 \end{equation*}
 {For such system, it has been} known that the $\omega$-limit set of any bounded orbit contains at most two minimal sets.
 Under certain direct assumptions on $f$, it is proved in this paper that the flow on any minimal set of \eqref{eq0}, with Neumann boundary condition or Dirichlet boundary condition, topologically conjugates to an almost periodically-forced minimal flow on
 $\mathbb{R}$.

 Finally, a counterexample is given to show that even for quasi-periodic equations, the results we obtain here cannot be further improved in general.
\end{abstract}

\section{Introduction}

In this paper, we
consider the asymptotic dynamics of the following parabolic equation
\begin{equation}\label{equation-1}
u_{t}=u_{xx}+f(t,u,u_{x}),\,\,\,\, t>0,\,\, 0<x<L
\end{equation}
with periodic boundary condition
\begin{equation}
\label{periodic-bc}
u(t,0)=u(t,L),\quad u_x(t,0)=u_x(t,L),
\end{equation}
or Neumann boundary condition
 \begin{equation}
 \label{neumann-bc}
 u_x(t,0)=u_x(t,L)=0,
 \end{equation}
 or Dirichlet boundary condition
 \begin{equation}
 \label{dirichlet-bc}
 u(t,0)=u(t,L)=0,
 \end{equation}
where $f:\RR\times\RR\times\RR\to \RR$ and all its partial derivatives (up to order $1$) are uniformly almost periodic in $t$ (see Definition \ref{almost}).

{Recently, the dynamics of time non-periodic
	equations have been attracting more and more attention. In practical matters, large quantities of systems evolve influenced by external effects which are
	roughly, but not exactly periodic, or under environmental forcing which exhibits different,
	incommensurate periods. As a consequence, models with such time dependence are characterized
	more appropriately by quasi-periodic or almost periodic equations or even by certain nonautonomous equations rather than by periodic ones.
	
   The motivation for us to study the dynamics of equation \eqref{equation-1} is that, although the equation is typical and also is one of the simplest models of infinite-dimensional dynamical systems, the longtime behavior of  bounded solutions of \eqref{equation-1} with multi-frequency driving is far away from well understanding (for instance, $f$ of $t$ is almost periodic), especially for the system \eqref{equation-1}+\eqref{periodic-bc}.}

 The asymptotic dynamics of \eqref{equation-1} with any of the boundary conditions \eqref{periodic-bc}-\eqref{dirichlet-bc} in the case that $f$ is independent of $t$ or periodic in $t$ have been widely studied in many works and are quite well understood (see \cite{Angenent1988,BPS,CCH,Chen1989160,Chen-P,CLM1994,CRo,Fiedler,JR1,JR2,Massatt1986,Matano, Matano78,SF1,Te}).

The asymptotic dynamics of \eqref{equation-1} with Neumann { boundary condition} \eqref{neumann-bc} or Dirichlet boundary condition \eqref{dirichlet-bc} (these boundary condition cases  are also referred to as the separated boundary condition cases) was systematically studied { by Shen and Yi} in \cite{Shen1995114,ShenYi-2,ShenYi-JDDE96,ShenYi-TAMS95} in terms of the skew-product semiflow generated by \eqref{equation-1} with the corresponding boundary condition. {It is proved that the flow on any minimal set $M$ of the skew-product semiflow is an almost automorphic extension of the base flow, and if the induced flow on $M$ is unique ergodic, then it topologically conjugates to an almost periodically
 forced minimal flow on $\RR$ (see \cite{ShenYi-JDDE96}). However, it is { still unknown} whether the flow on a general minimal set topologically conjugates to an almost-periodically forced minimal flow on $\mathbb{R}$.} In this paper, we will give a confirmed answer to this question
 in the case that $f(t,u,p)=f(t,u,-p)$ when the Neumann boundary is considered and that $f(t,u,p)=-f(t,-u,p)$ or {$f(t,0,p)=0$} when the Dirichlet boundary condition is considered.

{For periodic boundary condition \eqref{periodic-bc}, observe that, by the regularity of parabolic equations (see \cite[Corollary 15.3]{Amann}), \eqref{equation-1}+\eqref{periodic-bc} can be naturally converted into the following equation on the circle $S^1$,
\begin{equation}\label{equation-circle}
u_{t}=u_{xx}+f(t,u,u_{x}),\,\,t>0,\, x\in S^1=\mathbb{R}/L\mathbb{Z}.
\end{equation}
Therefore, in this paper, we focus on \eqref{equation-circle} in stead of \eqref{equation-1}+\eqref{periodic-bc}.
The present authors in \cite{SWZ,SWZ2,SWZ3} systematically} investigated the asymptotic dynamics of { equation \eqref{equation-circle}} in the framework of the skew-product semiflows. In particular, we studied in \cite{SWZ,SWZ3} { the structures of minimal sets, as well as the $\omega$-limit sets, for the associated skew-product semiflow of equation \eqref{equation-circle}} under { some restrictions} on the dimension of their center manifolds. We { further thoroughly presented in \cite{SWZ2} the characterizations of the general $\omega$-limit sets} when $f$ in \eqref{equation-circle} satisfies the reflection symmetry condition,  i.e.,  $f(t,u,p)=f(t,u,-p)$. {One may see in Subsection \ref{dicss} more description and discussion for these results \cite{SWZ,SWZ2,SWZ3}.} {Nevertheless, it remains unknown for} the universal dynamics on a general minimal set of  the skew-product semiflow generated by \eqref{equation-circle},
{ as well as the structure of a general $\omega$-limit set} of the skew-product semiflow. In this paper, { among others, we will present the structural theorems for the minimal sets and the $\omega$-limit sets}, without { any restriction} on the dimension of the center manifolds, which are major improvements of the works \cite{SWZ,SWZ3}.

In the following {two subsections}, we state our main results, {and give series of remarks on these main results} of this paper.

\subsection{Statements of the main results}

In this subsection, we state the main results of this paper.
We first consider \eqref{equation-circle} and introduce the skew-product semiflow generated by \eqref{equation-circle}. Throughout this paper, we assume that $f(t,u,p)\in C^1(\mathbb{R}\times \mathbb{R} \times \mathbb{R},\mathbb{R})$ and that  $f$ and  all its partial derivatives (up to order $1$) are uniformly almost periodic in $t$ (see Definition \ref{almost}). Then $f_{\tau}(t,u,p)=f(t+\tau,u,p)\, (\tau \in \RR)$ generates a family
$\{f_{\tau}|\tau \in \mathbb{R}\}$ in the space of continuous functions $C(\mathbb{R}\times \mathbb{R} \times \mathbb{R},\mathbb{R})$ equipped with the compact open topology. The closure $H(f)$ of $\{f_{\tau}|\tau\in \mathbb{R}\}$ in
the compact open topology, called the hull of $f$, is a compact metric space and every $g\in H(f)$ has the same regularity as $f$. Thus, the time-translation $g\cdot t\equiv g_{t}\,(g\in H(f))$ defines a compact flow on $H(f)$. Note that the flow on $H(f)$ is  minimal, that is, it is the only nonempty compact subset of itself that is invariant under the flow $g\cdot t$.

Equation \eqref{equation-circle} naturally
induces a family of equations associated to each $g\in H(f)$,
\begin{equation}\label{equation-lim1}
u_{t}=u_{xx}+g(t,u,u_{x}),\,\,\quad t> 0,\quad \, S^1=\mathbb{R}/L\mathbb{Z}.
\end{equation}
Assume that $X$ is a fractional power space associated with the operator $u\rightarrow
-u_{xx}:H^{2}(S^1)\rightarrow L^{2}(S^1)$ satisfies $X\hookrightarrow C^{1}(S^1)$ (that is, $X$ is compactly embedded into $C^{1}(S^1)$). For any $u\in X$, \eqref{equation-lim1} defines (locally) a unique solution $\varphi(t,\cdot;u,g)$ in $X$ with $\varphi(0,\cdot;u,g)=u(\cdot)$ and it continuously depends on $g\in H(f)$ and $u\in
X$. Consequently, \eqref{equation-lim1} admits a (local) skew-product semiflow $\Pi_{t}$ on $X\times
H(f)$:
\begin{equation}\label{equation-lim2}
\Pi_{t}(u,g)=(\varphi(t,\cdot;u,g),g\cdot t),\quad t\ge 0.
\end{equation}
It follows from \cite{Hen} (see also \cite{Hess,Mierczynski}) and the standard a priori estimates for parabolic equations that,  if
$\varphi(t,\cdot;u,g) (u\in X)$ is bounded in $X$ in the existence interval of the solution, then $u$ is a globally defined classical solution. Moreover,  for any $\delta>0$, $\{\varphi(t,\cdot;u,g):t\ge \delta\}$ is relatively compact in $X$. Consequently, the $\omega$-limit set $\omega(u,g)$ of the bounded semi-orbit $\Pi_{t}(u,g)$ in $X\times H(f)$ is a nonempty connected compact subset of $X\times H(f)$. { We write $p:X\times H(f)\rightarrow H(f)$ as the nature projection onto $H(f)$.}

\bigskip

 Given any $u\in X$ and $a\in S^1$, we define the shift $\sigma_a$ on $u$ as $(\sigma_a u)(\cdot)=u(\cdot+a)$. Let $u\in A\subset X$, we write
 \begin{equation*}\label{E:group-orbit-11}
 \Sigma u=\{\sigma_a u\,|\, a\in S^1\}
 \end{equation*} as the {\it $S^1$-group orbit} of $u$, and write $\sigma_a A=\{\sigma_au|u\in A\}$ and $\Sigma A=\cup_{u\in A}\Sigma u$, respectively. { A subset $M\subset X\times H(f)$ is called {\it spatially-inhomogeneous}, if any $u\in M$ is independent of spatial variable $x$.}

 { A series of theorems we are about to state are strongly dependent on the following important technical Lemma, which
 	exhibits the constancy property of zero number on the minimal set.
 	
 	\begin{lemma}
 		\label{center-constant}
 		(\textbf{Constancy Property Lemma}) Consider \eqref{equation-circle}. Let $M\subset X\times H(f)$ be a minimal set of \eqref{equation-lim2}. Then
 		there exists $N\in \mathbb{N}$ such that
 		\begin{equation}\label{const-mini}
 		z(\varphi(t,\cdot;u_1,g)-\varphi(t,\cdot;\sigma_au_2,g))=N
 		\end{equation}
 		for any $t\in\mathbb{R}$, any $(u_1,g), (u_2,g)\in M\cap p^{-1}(g)$, and any  $a\in S^1 \text{ with }u_1\neq \sigma_au_2$, $z(\cdot)$ is the zero number function defined in Section 2.3.
 \end{lemma}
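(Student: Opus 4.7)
The plan is to combine Angenent's zero-number dropping theorem with recurrence on minimal subsets of the fiber product $M\times_{H(f)}M$ and the lower semi-continuity of $z$ under $C^1$-convergence (available because $X\hookrightarrow C^1(S^1)$). I would first set, for any fixed data $(u_1,g),(u_2,g)\in M$ and $a\in S^1$ with $u_1\neq\sigma_au_2$, $w(t,x):=\varphi(t,x;u_1,g)-\varphi(t,x;\sigma_au_2,g)$; then $w$ satisfies a linear parabolic equation $w_t=w_{xx}+b(t,x)w_x+c(t,x)w$ on $S^1$ with bounded coefficients coming from $f_p$ and $f_u$ along the two trajectories. Compactness of $M$ makes both trajectories globally defined in $t\in\mathbb{R}$, and backward uniqueness on invariant sets of parabolic semiflows ensures $w(t,\cdot)\not\equiv 0$ for every $t$. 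Angenent's theorem then makes $N(t):=z(w(t,\cdot))$ finite, non-increasing, non-negative integer-valued, and strictly dropping whenever $w(t_0,\cdot)$ has a multiple zero.

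Next I would prove $N(t)$ is constant. Consider the induced skew-product $\tilde\Pi_t(u,v,g)=(\varphi(t;u,g),\varphi(t;v,g),g\cdot t)$ on $M\times_{H(f)}M$; its $\omega$-limit $\omega(u_1,\sigma_au_2,g)$ contains a minimal subset $\tilde M^+$, and symmetrically its $\alpha$-limit contains a minimal subset $\tilde M^-$. On any minimal subset of $M\times_{H(f)}M$, Angenent's strict drop combined with recurrence and lsc forces $z$ to be constant along each orbit, and in fact forces the difference $\varphi(t;u,g)-\varphi(t;v,g)$ to possess only simple zeros for every $t\in\mathbb{R}$ (otherwise a drop would contradict the recurrence-forced constancy). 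Consequently, $z$ becomes continuous at each such difference in the $C^1$-topology, and its value is realized exactly in $C^1$-limits, yielding $z\equiv N^+$ on $\tilde M^+$ and $z\equiv N^-$ on $\tilde M^-$, where $N^\pm:=\lim_{t\to\pm\infty}N(t)$. To identify $N^+=N^-$, I would choose Bohr-recurrent return times $\tau_n\to+\infty$ with $g\cdot\tau_n\to g$, extract subsequential limits of $\tilde\Pi_{\tau_n}(u_1,\sigma_au_2,g)$ and $\tilde\Pi_{-\tau_n}(u_1,\sigma_au_2,g)$ via compactness of $M$, and then use flow equivariance together with the fact that these limits in $\tilde M^+$ and $\tilde M^-$ lie over the same base point $g$ to force a common $z$-value.

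Finally, to obtain a single $N$ independent of $(u_1,u_2,a,g)$, I would define $\mathcal N(u_1,u_2,a,g):=z(u_1-\sigma_au_2)$ on the open set $\Omega:=\{(u_1,u_2,a,g)\in M\times_{H(f)}M\times S^1\times H(f):u_1\neq\sigma_au_2\}$. By the previous step $\mathcal N$ is invariant under the extended flow $(u_1,u_2,a,g)\mapsto(\varphi(t;u_1,g),\varphi(t;u_2,g),a,g\cdot t)$; being integer-valued, lower semi-continuous, and constant along orbits, it is forced, by connectedness of $\Omega$ through the $S^1$-action and by minimality of $H(f)$ coupling all fibers under the base flow, to take a single value $N\in\mathbb{N}$ throughout $\Omega$.

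The main obstacle will be the equality $N^+=N^-$ in the second step. The original orbit $(u_1,\sigma_au_2,g)$ need not be jointly recurrent in $M\times_{H(f)}M$, so a direct recurrence-plus-lsc argument cannot be applied to it; one must instead transfer the question to the minimal subsets $\tilde M^\pm$ of the $\omega$- and $\alpha$-limits and then equate their $z$-values by tracking the almost-periodic return structure of the base. The key enabling observation is that on any minimal orbit the difference carries only simple zeros (otherwise Angenent's strict drop would contradict the recurrence-forced constancy on that minimal set), which upgrades the naive lsc bound $z(\cdot)\le N^\pm$ to an equality and makes the identification possible.
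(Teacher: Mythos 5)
Your setup (the linear equation for $w$, Angenent's dropping lemma, recurrence on minimal subsets of the fiber product forcing simple zeros and constancy of $z$ there) is sound as far as it goes, but the proposal has two genuine gaps, and the second is precisely the point the paper's argument is built to overcome. First, your quantity $N^-:=\lim_{t\to-\infty}N(t)$ need not exist as a finite number: $N(t)=z(w(t,\cdot))$ is non-increasing, hence bounded as $t\to+\infty$, but going backward in time it is non-decreasing with no a priori upper bound, so it may tend to $+\infty$ (infinitely many drops accumulating from $-\infty$). Lower semi-continuity then tells you nothing useful about the relation between $N(t)$ for finite negative $t$ and the constant value of $z$ on a minimal subset $\tilde M^-$ of the $\alpha$-limit set. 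The paper closes this gap with Lemma \ref{zero-up-control} (the Chow--Lu--Mallet-Paret Floquet-bundle result): since both trajectories remain in the compact set $M$, the difference $w$ is an exponentially bounded entire solution of \eqref{linear-equation2}, hence $z(w(t,\cdot))$ is bounded uniformly in $t\in\mathbb{R}$, which yields an eventual constant $N_a^-$ as $t\to-\infty$. Your proof never invokes anything playing this role, and without it the inequality $z\le N$ cannot be reached.

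Second, your mechanism for identifying $N^+$ with $N^-$ (and with a single global $N$) does not work as stated: two distinct minimal subsets of $M\times_{H(f)}M$ lying over the same base point $g$ can in principle carry different constant zero numbers, and ``flow equivariance'' plus a common base point does not equate them; moreover, if the pair $(u_1,\sigma_au_2)$ is positively or negatively proximal, the minimal subsets $\tilde M^\pm$ you extract may lie in the diagonal, where the difference vanishes identically and your ``only simple zeros on the minimal set'' step is vacuous. The paper's route is different and essential here: it first shows (via minimality and Lemma \ref{sequence-limit}) that $z(\varphi(t,\cdot;u,g)-\varphi(t,\cdot;\sigma_au,g))$ equals one universal constant $N$ for \emph{all} $(u,g)\in M$ and all nontrivial translates $a$, and then proves Lemma \ref{asy-translate} --- by a maximum-value selection argument in the fiber $M\cap p^{-1}(g^+)$ --- that the forward \emph{and} backward limits of the pair $(u_1,\sigma_au_2)$ can be chosen to be $S^1$-translates of one another. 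Both limiting zero numbers are therefore forced to equal the same $N$, with the degenerate translates handled by a small perturbation in $a$ (Lemma \ref{zero-cons-local}) together with spatial inhomogeneity. You would need to supply both of these ingredients --- the backward bound from exponential boundedness and the translate structure of the asymptotic limits --- for the proposal to become a proof.
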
}

 \vskip 1mm
 { Our first two theorems indicate that, for the skew-product semiflow \eqref{equation-lim2} generated by equation \eqref{equation-circle}, any spatially-inhomogeneous minimal set $M\subset X\times H(f)$ can be residually embedded into an invariant set of an almost automorphically-forced circle flow. If, in addition, $f(t,u,u_x)=f(t,u,-u_x)$ in \eqref{equation-circle}, then the flow on $M$ topologically conjugates to an almost periodically-forced minimal flow on $\mathbb{R}$.}

{
\begin{theorem}\label{a-a-circle-flow}
Let $M\subset X\times H(f)$ be a spatially-inhomogeneous  minimal set of \eqref{equation-lim2}.
Then
\begin{itemize}

\item[(1)] There exists $L^0\in (0,L]$ such that each $u\in M$ shares a common smallest spatial-period $L^0$, i.e., $u(\cdot+L^0)=u(\cdot)$ and $u(\cdot+a)\neq u(\cdot)$ for any $a\in (0,L^0)$.

 \item[(2)]There is a residual invariant set $Y_0\subset H(f)$ such that, for any $g\in Y_0$, there exists $u_g \in X$ satisfying $u_g(0)=\max_{x\in S^1}u_g(x)$ and $p^{-1}(g)\cap M\subset (\Sigma u_g,g)$.
  \end{itemize}
Assume further that $f$ is $C^2$-admissible (see Definition \ref{admissible}) and let $M_0=\{(u,g)\in M\,|\, g\in Y_0\}$. Then
\begin{itemize}
 \item[(3)]
For any given $g\in Y_0$, the function
$t\mapsto G(t;g)=g_p(t,u_{g\cdot t}(0),0)+\frac{u^{'''}_{g\cdot t}(0)}{u^{''}_{g\cdot t}(0)}
$ is alomst-automorphic. Let
$$
\bar M_0=\{(c(u),G(\cdot;g))\,|\, (u,g)\in M_0,\,\, c(u)\in \mathcal{S} \,\, \text{is such that}\,\, u(\cdot)=u_g(\cdot+c(u))\},
$$
where  $\mathcal{S}=\mathbb{R}/L^0\mathbb{Z}$. Then
$$\bar h_0:M_0\to \bar M_0;\,\, \bar h_0(u,g)=(c(u),G(\cdot;g))\subset \mathcal{S}\times H(G)
$$
is a homeomorphism.

\item[(4)] For any given $g\in Y_0$, there is a $C^1$-function $c^g:\mathbb{R}\to \mathcal{S}, t\mapsto c^g(t)$ (with its derivative $\dot c^g(t)$ being almost-automorphic in $t$) such that
$$
{\bar \Pi_t^0: \bar M_0\to\bar M_0;\quad \bar\Pi_t^0 (c,G)=(c+c^g(t),G(\cdot+t,g))}
$$
is a flow on $\bar M_0$, and
 $$
\bar h_0\Pi_t (u,g)=\bar\Pi_t^0 \bar h_0(u,g)\quad \forall \, t\in \mathbb{R},\, \, (u,g)\in M_0.
$$
That is, the following diagram is commutative:
$$
\begin{CD} M_0 @>\bar h_0>> \bar{M}_0\\ @VV\Pi_tV @VV\bar\Pi_t^0V\\ M_0 @>\bar h_0>> \bar{M}_0.
\end{CD}$$
  \end{itemize}
\end{theorem}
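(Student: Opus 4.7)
The plan is to prove parts (1)--(4) in order, with the Constancy Property Lemma~\ref{center-constant} as the central technical tool.

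\emph{Part (1).} I would apply Lemma~\ref{center-constant} with $u_1=u_2=u$ for $(u,g)\in M$ to produce an integer $N$, independent of $(u,g)\in M$ and of $a$, with $z(u-\sigma_a u)=N$ whenever $\sigma_a u\ne u$. The stabilizer $\{a\in S^1:\sigma_a u=u\}$ is a closed subgroup $L_u\ZZ/L\ZZ$ of $S^1$, so $L_u\in(0,L]$ is the smallest spatial period of $u$ (positive since $M$ is spatially inhomogeneous). A direct count for $a\in(0,L_u)$ small gives $z(u-\sigma_a u)=2L/L_u$ (one pair of sign changes localized near each maximum and minimum per period), so constancy of $N$ forces $L_u\equiv L^0:=2L/N$ on all of $M$.

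\emph{Part (2).} Next I would pass to the quotient $\tilde M:=M/\Sigma$, which carries an induced compact minimal flow over $H(f)$ since the $S^1$-action commutes with $\Pi_t$ on \eqref{equation-circle}. Applying Lemma~\ref{center-constant} to two distinct $\Sigma$-orbits $u_1\notin\Sigma u_2$ yields a uniform finite bound on the number of $\Sigma$-orbits in each fiber of $\tilde M\to H(f)$, so the fiber cardinality is upper semicontinuous and bounded. By the standard Veech-type structure theory for almost automorphic extensions of minimal almost-periodic bases (as used in \cite{ShenYi-JDDE96}), the set $Y_0:=\{g:|p^{-1}(g)\cap\tilde M|=1\}$ is then a residual invariant subset of $H(f)$. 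For $g\in Y_0$ the fiber $p^{-1}(g)\cap M$ is a single $\Sigma$-orbit; combined with the zero-number count $z(u')=2L/L^0$ from part (1) (giving exactly one maximum of $u$ per fundamental interval of length $L^0$), the normalization $u_g(0)=\max u_g$ singles out $u_g$ uniquely modulo $L^0\ZZ$, which is trivial in $\mathcal{S}=\RR/L^0\ZZ$.

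\emph{Parts (3) and (4).} Under $C^2$-admissibility, $u_g$ is $C^3$ in $x$ with $u_g''(0)<0$. Because $Y_0$ is invariant and the fiber is a single $\Sigma$-orbit, I can write $\varphi(t,x;u_g,g)=u_{g\cdot t}(x+c^g(t))$ for a $C^1$ function $c^g:\RR\to\mathcal{S}$ with $c^g(0)=0$. Differentiating the critical-point identity $\varphi_x(t,-c^g(t);u_g,g)=0$ in $t$ and inserting the PDE $\varphi_t=\varphi_{xx}+g(t,\varphi,\varphi_x)$ (whence $\varphi_{xt}=\varphi_{xxx}+g_u\varphi_x+g_p\varphi_{xx}$) gives at $x=-c^g(t)$:
\[
\dot c^g(t)=\frac{u_{g\cdot t}'''(0)}{u_{g\cdot t}''(0)}+g_p\bigl(t,u_{g\cdot t}(0),0\bigr)=G(t;g).
\]
Almost automorphy of $G(\cdot;g)$ for $g\in Y_0$ then follows from continuity of $g\mapsto u_g$ on $Y_0$ (the defining property of an almost-automorphic point of the extension) together with the almost automorphy of $g$ itself in $t$. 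The map $\bar h_0(u,g)=(c(u),G(\cdot;g))$ is continuous and injective on $M_0$ (since $c(u)$ with $g$ recovers $u=u_g(\cdot+c(u))$, and $g\mapsto G(\cdot;g)$ is injective by minimality of $H(G)\to H(f)$), its image is $\bar M_0$ by construction, and compactness upgrades this to a homeomorphism. The commutative diagram then follows directly from $\varphi(t,\cdot;u_g,g)=u_{g\cdot t}(\cdot+c^g(t))$ and $\dot c^g=G(\cdot;g)$.

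\emph{Main obstacle.} The hardest step will be part (2): upgrading the algebraic constancy of Lemma~\ref{center-constant} to the topological statement that $Y_0$ is residual and invariant, which requires the abstract almost automorphic extension machinery and a careful identification of $Y_0$ with the almost-automorphic set of $\tilde M\to H(f)$. A secondary difficulty is verifying that $g\mapsto u_g$ is continuous on $Y_0$, which is needed both for the almost automorphy of $G$ and for the continuity of $\bar h_0^{-1}$; the subtlety is that this continuity fails off $Y_0$, and one must exploit the singleton-fiber property precisely on the residual set.
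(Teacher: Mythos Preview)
Your outline for parts (3) and (4) matches the paper's argument and is fine: the definition of $c^g(t)$ via $\varphi(t,x;u_g,g)=u_{g\cdot t}(x+c^g(t))$, differentiation of the critical-point identity, and the resulting formula $\dot c^g(t)=G(t;g)$ are exactly what the paper does. The real gap is in part (2); there is also a smaller issue in part (1).

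\textbf{Part (2).} You correctly flag this as the crux, but the proposed route does not go through. First, Lemma~\ref{center-constant} does \emph{not} bound the number of $\Sigma$-orbits in a fiber: the constancy $z(u_1-\sigma_a u_2)=N$ constrains each pair but says nothing about how many such orbits can coexist. Second, even granting uniformly finite fibers, that alone would not force residually many singleton fibers: a finite-to-one \emph{distal} extension of a minimal flow has no singleton fibers at all. The Veech structure theorem yields almost automorphy over the maximal equicontinuous factor of $\tilde M$, not over $H(f)$, and there is no a priori reason these coincide. What the paper actually uses is the \emph{total order} on fibers of $\tilde M$ given by $m(u)=\max_x u(x)$. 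The Constancy Lemma implies (properties (i)--(iii) in the paper's outline) that $[u]\mapsto m(u)$ is injective on each fiber and that $\tilde\Pi_t$ is strictly order-preserving in this sense. One then shows, using the residual set $Y'$ of Lemma~\ref{epimorphism-thm}, that over $Y'$ no strongly order-preserving pair can occur in a fiber, which forces singleton fibers there. This order structure, coming specifically from the zero-number constancy, is the essential mechanism; an abstract invocation of almost-automorphic extension theory without it is not enough.

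\textbf{Part (1).} Your count $z(u-\sigma_a u)=2L/L_u$ for small $a$ presupposes that each fundamental period carries exactly one maximum and one minimum; in general $z(u-\sigma_a u)\to z(u')\ge 2L/L_u$ as $a\to 0$, with strict inequality whenever $u$ has extra critical points per period, so constancy of $N$ does not pin down $L_u$ this way. The paper's argument is simpler and avoids the issue: by minimality choose $t_n$ with $\Pi_{t_n}(u_1,g_1)\to(u_2,g_2)$; since each $\varphi(t_n,\cdot;u_1,g_1)$ is $L_{u_1}$-periodic in $x$, so is the limit $u_2$, whence $L_{u_2}\mid L_{u_1}$, and by symmetry $L_{u_1}=L_{u_2}$.
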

}

 \begin{theorem}\label{a-p-flow}
{ Assume that $f(t,u,u_x)=f(t,u,-u_x)$ in \eqref{equation-circle}.} Let $M\subset X\times H(f)$ be a minimal set of \eqref{equation-lim2}. Then,
there is $x_0\in S^1$ such that the mapping
 $$\bar h: M\to \bar M:=\{(u(x_0),g)\,|\, (u,g)\in M\},\quad h(u(\cdot),g)=(u(x_0),g),
   $$
   is a homeomorphism;
   $$\bar\Pi_t: \bar M\to\bar M,\quad \bar\Pi_t(u(x_0),g)=(\phi(t,x_0;u,g),g\cdot t),
   $$
    is a minimal flow { satisfying}
  $\bar h(\Pi_t (u,g))= \bar\Pi_t \bar h(u,g)$.
\end{theorem}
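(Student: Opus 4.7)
The plan is to exploit the reflection symmetry $f(t,u,-p)=f(t,u,p)$ to collapse the circle factor appearing in Theorem~\ref{a-a-circle-flow}: I will show that (after normalization) every element of $M$ is even about $x=0$, so that the fibers of $p|_M$ are singletons and evaluation at $x_0=0$ gives the required homeomorphism.

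If $M$ is spatially homogeneous, every $u\in M$ is constant in $x$ and any $x_0$ works trivially, so assume $M$ is spatially inhomogeneous. Theorem~\ref{a-a-circle-flow} then furnishes a smallest common spatial period $L^0\in(0,L]$, a residual invariant set $Y_0\subset H(f)$, and a selection $g\mapsto u_g$ on $Y_0$ with $u_g(0)=\max_x u_g(x)$ and $p^{-1}(g)\cap M\subset(\Sigma u_g,g)$. Introduce the reflection $R\colon X\to X$, $(Ru)(x):=u(-x)$; the symmetry hypothesis yields $R\circ\Pi_t=\Pi_t\circ R$, so $RM$ is also a minimal set, and either $RM=M$ or $RM\cap M=\emptyset$. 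In the case $RM=M$, the element $Ru_g$ satisfies $(Ru_g)(0)=u_g(0)=\max u_g$ and lies in $p^{-1}(g)\cap M\subset\Sigma u_g$, so $Ru_g=\sigma_{a(g)}u_g$ with $u_g(a(g))=\max u_g$. The minimality of $L^0$ in Theorem~\ref{a-a-circle-flow}(1) forces $a(g)=0$ in $\mathbb{R}/L^0\mathbb{Z}$, because a second maximum of $u_g$ in $(0,L^0)$, combined with evenness about $0$, would furnish an additional symmetry axis and thereby force the period to halve. Hence $Ru_g=u_g$, i.e.\ $u_g$ is even. The flow preserves evenness since $R\varphi(t;u_g,g)=\varphi(t;Ru_g,g)=\varphi(t;u_g,g)$, and as evenness is a closed condition while $M$ is the closure of the orbit of $(u_g,g)$, every $u\in M$ is even.

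Take $x_0=0$. For injectivity of $\bar h$ on a fiber, let $u_1\neq u_2$ in $p^{-1}(g)\cap M$; both are even, so $v:=u_1-u_2$ is even. By the Constancy Property Lemma~\ref{center-constant}, $z(v)\equiv N$ along the orbit, and the standard parabolic zero-number theory forces every zero of $v$ to be transverse (otherwise a strict drop of $z$ would occur at a later time, contradicting constancy). But evenness of $v$ with $v(0)=0$ gives $v'(0)=0$, making $0$ a non-transverse zero---a contradiction. Hence $u_1(0)\neq u_2(0)$; injectivity across distinct fibers is automatic since $\bar h$ retains the $H(f)$-coordinate. Continuity of $\bar h$ follows from $X\hookrightarrow C^1(S^1)$; with $M$ compact and $\bar M\subset\mathbb{R}\times H(f)$ Hausdorff, $\bar h$ is a homeomorphism. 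The conjugate flow $\bar\Pi_t:=\bar h\circ\Pi_t\circ\bar h^{-1}$ coincides with the stated formula $(u(0),g)\mapsto(\varphi(t,0;u,g),g\cdot t)$ and is minimal because $\Pi_t|_M$ is.

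The main obstacle is the alternative case $RM\cap M=\emptyset$, where no element of $M$ is even and the evenness argument is unavailable. My plan for this case is to show that the fibers of $p|_M$ must already be singletons: the Constancy Property Lemma applied on $M$ and $RM$, together with the canonical isomorphism established between them by $R$, should preclude $M$ from carrying a nontrivial circle direction while remaining disjoint from its reflected copy---intuitively, the almost-automorphic structure of Theorem~\ref{a-a-circle-flow}(3)--(4) and the explicit form $G(t;g)=g_p(t,u_{g\cdot t}(0),0)+u'''_{g\cdot t}(0)/u''_{g\cdot t}(0)$ become constrained under $R$ (using $g_p(t,u,0)=0$ from $g$ being even in $p$) in a way that collapses the $\mathcal{S}$-fiber. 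Once singleton fibers are established, any $x_0\in S^1$ makes $\bar h$ injective and the rest of the argument proceeds as in the $RM=M$ case.
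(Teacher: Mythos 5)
Your overall strategy -- find a single point $x_0$ at which every $u\in M$ has $u_x(x_0)=0$, then combine this with the Constancy Property Lemma (constancy of $z$ forces $u_1-u_2$ to have only simple zeros, so $u_1(x_0)=u_2(x_0)$ would make $x_0$ a multiple zero) and the compact-to-Hausdorff argument for the inverse -- is exactly the skeleton of the paper's proof. The difference is where the common critical point comes from: the paper simply invokes Lemma \ref{order} (i.e.\ \cite[Proposition 3.2]{SWZ2}), which asserts the existence of such an $x_0$ for the whole $\omega$-limit set under the reflection symmetry, with no case distinction. You instead try to rederive it from the reflection operator $R$, and this is where the proposal breaks down.

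The genuine gap is the case $RM\cap M=\emptyset$. You explicitly leave it as ``the main obstacle'' with only a heuristic plan (constraints on $G(t;g)$ under $R$ ``should'' collapse the fiber); nothing there is a proof, and ruling this case out (or extracting a common critical point in it) is precisely the nontrivial content of \cite[Proposition 3.2]{SWZ2}. Without it the theorem is not established. In addition, even in the case $RM=M$ your normalization step is wrong as written: from $Ru_g=\sigma_{a(g)}u_g$ and $u_g(a(g))=\max u_g$ you cannot conclude $a(g)=0$, since a function of minimal period $L^0$ may attain its maximum at several points of $[0,L^0)$, and the ``second maximum forces the period to halve'' claim is false (you only have one symmetry axis, at $-a(g)/2$, which is compatible with minimal period $L^0$). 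This slip is repairable -- the relation $u_g(-x)=u_g(x+a(g))$ says $u_g$ is even about $c=-a(g)/2$, the reflection about $c$ commutes with the flow, and taking any $(w,g)\in M$ with $Rw=\sigma_a w$ one gets that all of $M$ is even about the corresponding axis, so $x_0=c$ works -- but note also that the representative $u_g$ normalized by $u_g(0)=\max u_g$ need not itself belong to $M$ (Theorem \ref{a-a-circle-flow}(2) only gives $M\cap p^{-1}(g)\subset(\Sigma u_g,g)$, possibly a proper subset of the group orbit), so the ``closure of the orbit of $(u_g,g)$'' argument must be run from an actual element of $M$. In short: the $RM=M$ branch can be salvaged, but the disjoint branch is an unfilled hole that the paper closes by citing an external result you would need to reprove.
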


{ The following Theorem \ref{structure-thm} gives a trichotomy of the structure of a general $\omega$-limit set of \eqref{equation-lim2}.}

\begin{theorem}\label{structure-thm}
 Assume that $\Omega$ is an $\omega$-limit set of \eqref{equation-lim2}.
Then one of the following alternatives must hold:
\begin{itemize}
\item[{ \rm (1)}] There is a minimal set $M\subset \Omega$ such that $\Omega\subset \Sigma M$.

\item[{ \rm (2)}] There is a minimal set $M_1\subset \Omega$ such that $\Omega\subset \Sigma M_1\cup M_{11}$, where $M_{11}\subset \Omega$ with $M_{11}\neq \emptyset$ and $M_{11}$ connects $\Sigma M_1$ in the sense that if $(u_{11},g)\in M_{11}$, then $\Sigma M_1\cap \omega(u_{11},g)\not =\emptyset$ and $\Sigma M_1\cap\alpha (u_{11},g)\not =\emptyset$.

\item[{ \rm (3)}] There are two minimal sets $M_1,M_2\subset \Omega$ with $\Sigma M_1\cap \Sigma M_2=\emptyset$ such that
 $\Omega\subset \Sigma M_1\cup \Sigma M_2\cup M_{12}$, where $M_{12}\subset \Omega$ with $M_{12}\not =\emptyset$, and for any $(u_{12},g)\in M_{12}$, $\omega(u_{12},g)\cap (\Sigma M_1\cup\Sigma M_2)\not=\emptyset$ and  $\alpha(u_{12},g)\cap (\Sigma M_1\cup\Sigma M_2)\not=\emptyset$.
\end{itemize}
\end{theorem}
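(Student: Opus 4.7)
The plan is to combine the Constancy Property Lemma~\ref{center-constant} with standard compact-invariant-set techniques to deduce the trichotomy. First I would verify that on the compact invariant set $\Omega$ the semiflow $\Pi_t$ extends uniquely to a full flow: backward uniqueness for the linearized parabolic equation, together with the fact that every point of $\Omega$ sits on an entire bounded classical orbit, forces the backward trajectory to be unique and contained in $\Omega$. Hence for every $(u,g)\in \Omega$ both $\omega(u,g)$ and $\alpha(u,g)$ are nonempty, compact, invariant subsets of $\Omega$, and each contains a minimal set by the usual Zorn's-lemma argument.

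The heart of the proof is to show that $\Omega$ harbours at most two distinct $\Sigma$-equivalence classes of minimal sets. I would argue by contradiction: suppose three pairwise non-$\Sigma$-equivalent minimal sets $M_1, M_2, M_3 \subset \Omega$ coexist. Choosing $g \in H(f)$ in the projection of all three (dense by minimality of the base flow) and picking $u_i$ with $(u_i,g)\in M_i$, the pairwise differences $\varphi(t,\cdot;u_i,g)-\varphi(t,\cdot;\sigma_a u_j,g)$ are entire bounded classical solutions of the corresponding linearized equation, so their zero numbers are constant in $t$ (monotonicity of $z$ plus a time-reversal argument on the compact set). Combined with Lemma~\ref{center-constant} applied inside each $M_i$, the constants $N_{ij}(a):=z(u_i-\sigma_a u_j)$ obey strong compatibility constraints under the $S^1$-action; comparing how $N_{ij}(a)$ varies with $a\in S^1$ (in particular, relating $N_{13}(a)$ to $N_{12}$, $N_{23}$ via coincidence events) yields an arithmetic contradiction, so at most two $\Sigma$-classes of minimal sets can coexist in $\Omega$.

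With this counting bound in hand, I would conclude by case analysis. If $\Omega$ contains a single $\Sigma$-class $\Sigma M$, then either $\Omega=\Sigma M$, giving alternative~(1), or $M_{11}:=\Omega\setminus\Sigma M\neq\emptyset$; in the latter case, for any $(u_{11},g)\in M_{11}$ both $\omega(u_{11},g)$ and $\alpha(u_{11},g)$ are nonempty compact invariant subsets of $\Omega$ and hence contain a minimal subset, which by hypothesis must lie in $\Sigma M$, yielding alternative~(2). If $\Omega$ contains exactly two $\Sigma$-classes $\Sigma M_1$ and $\Sigma M_2$, the same reasoning applied to $M_{12}=\Omega\setminus(\Sigma M_1\cup\Sigma M_2)$ produces alternative~(3); the disjointness $\Sigma M_1\cap\Sigma M_2=\emptyset$ is built into the definition of ``two $\Sigma$-classes''.

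The principal obstacle is the counting bound: squeezing it out of Lemma~\ref{center-constant} requires tracking how $z(u_1-\sigma_a u_2)$ behaves jointly under the continuous $S^1$-action and across minimal-set boundaries, and ruling out the existence of a third class by a nontrivial combinatorial argument on zero numbers. The remaining steps, namely the extension of $\Pi_t$ to a flow on $\Omega$ via backward uniqueness and the standard Butler--McGehee-type argument producing the connecting property in $M_{11}$ and $M_{12}$, are comparatively routine once the bound is established.
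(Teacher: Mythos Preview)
Your overall scaffolding (flow extension on $\Omega$, then a counting bound on $\Sigma$-classes of minimal sets, then the case analysis producing the trichotomy) matches the paper's, and your final paragraph is essentially correct. The genuine gap is in the counting step.

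First, the quantities $N_{ij}(a)=z(u_i-\sigma_a u_j)$ do \emph{not} vary with $a$: once one knows (as in Lemma~\ref{center-constant} and its extension to two disjoint minimal sets, cf.\ item~(i) in the paper's proof) that the zero number is constant along entire orbits, a continuity argument in $a$ forces $N_{ij}(a)\equiv N_{ij}$ to be a single integer independent of $a$, of $t$, and of the particular representatives $(u_i,g)\in M_i$. So there are no ``coincidence events'' to compare and no arithmetic obstruction of the type you suggest: three integers $N_{12},N_{13},N_{23}$ (possibly all equal) are perfectly compatible, and nothing in the zero-number calculus alone rules out a third $\Sigma$-class.

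The paper obtains the bound by a different, order-theoretic mechanism. One passes to the quotient $\tilde X=X/\Sigma$ and the induced skew-product flow $\tilde\Pi_t$; the function $m(u)=\max_{x}u(x)$ descends to a \emph{total} order on each fiber $\tilde M\cap\tilde p^{-1}(g)$, and $\tilde\Pi_t$ is strictly order preserving there (this is where the zero-number constancy is really used). For two distinct minimal sets $\tilde M_1,\tilde M_2\subset\tilde\Omega$ one shows the fiberwise intervals $[m_i(g),M_i(g)]$ are disjoint and uniformly separated in $g$. The bound ``at most two'' then comes from the fact that $\tilde\Omega=\omega([u_0],g_0)$ is itself a connected $\omega$-limit set of $\tilde\Pi_t$: a third minimal set, sandwiched in the order between the other two, would block the forward orbit of $([u_0],g_0)$ from accumulating on both outer sets, contradicting $\tilde\Omega=\omega([u_0],g_0)$. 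This order-plus-connectivity argument (not a zero-number arithmetic) is the missing idea; once you have it, your case analysis goes through verbatim after lifting the trichotomy for $\tilde\Omega$ back to $\Omega$.
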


To state the main results on the asymptotic dynamics of \eqref{equation-1} with Neumann boundary condition \eqref{neumann-bc}
or Dirichlet boundary condition \eqref{dirichlet-bc}, we introduce the following standing assumptions on $f(t,u,p)$.

\smallskip

\noindent {\bf (HNB)} $f(t,u,-p)=f(t,u,p)$ for any $(t,u,p)\in \RR\times\RR\times\RR$.

\smallskip

\noindent {\bf (HDB1)} $f(t,-u,p)=-f(t,u,p)$ for any $(t,u,p)\in \RR\times\RR\times\RR$.
\smallskip

\noindent {\bf (HDB2)}  $f(t,0,p)\equiv 0$.

\smallskip

Consider \eqref{equation-1}+\eqref{neumann-bc}. Let $X_N$ be a fractional power space associated with the operator $u\rightarrow -u_{xx}:H^{2}(0,L)\rightarrow L^{2}(0,L)$ with Neumann boundary condition $u_x(0)=u_x(L)=0$ such that $X_N\hookrightarrow C^{1}[0,L]$.
 Let $\Pi^N_t$ be the  (local) skew-product semiflow on $X_N\times
H(f)$ generated by \eqref{equation-1}+\eqref{neumann-bc},
\begin{equation}
\label{Ne-skew-product}
\Pi_t^N(u_0,g)=(\phi^N(t,x;u_0,g),g\cdot t)\quad \forall\,\, u_0\in X_N,\,\, g\in H(f),
\end{equation}
where $g\in H(f)$ and $u(t,x)=\phi^N(t,x;u_0,g)$ is the solution of
\begin{equation}
\label{neumann-eq-bc}
\begin{cases}
u_t=u_{xx}+g(t,u,u_x),\quad 0<x<L\cr
u_x(t,0)=u_x(t,L)=0
\end{cases}
\end{equation}
 with $u(0,x)=u_0(x)$. For a given minimal set $M\subset X_N\times H(f)$ of $\Pi_t^N$, let
$$
M_0^N=\{(u_0(0),g)\,|\, (u_0,g)\in M\}.
$$
We have

 \begin{theorem}
 \label{Ne-imbed} Consider {\rm \eqref{equation-1}+\eqref{neumann-bc}} and assume
{\bf (HNB)}. Then for any minimal set $M$ of $\Pi_t^N$, the mapping $M\ni(u_0,g)\to (u_0(0),g)\in M_0^N$ is a continuous bijection,
and $\Pi_t^N|_M$ is topologically conjugated to $\pi_t^N:M_0^N\to M_0^N$, where
$$
\pi_t^N(u_0(0),g)=(\phi^N(t,0;u_0,g),g\cdot t)\quad \forall\, (u_0,g)\in M.
$$
Hence,  the flow on $M$ topologically conjugates to an almost periodically-forced minimal flow on $\mathbb{R}$.
 \end{theorem}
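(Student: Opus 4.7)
The plan is to reduce the Neumann problem under \textbf{(HNB)} to a periodic problem on a doubled circle and then invoke Theorem~\ref{a-p-flow}. For $u_0\in X_N$ define its even reflection $\tilde u_0$ on $\tilde S^1=\mathbb{R}/2L\mathbb{Z}$ by $\tilde u_0(x)=u_0(x)$ for $x\in[0,L]$ and $\tilde u_0(x)=u_0(2L-x)$ for $x\in[L,2L]$. Hypothesis \textbf{(HNB)}, namely $f(t,u,-p)=f(t,u,p)$, ensures that if $u$ solves \eqref{neumann-eq-bc} then its reflection $\tilde u$ solves the periodic problem \eqref{equation-lim1} on $\tilde S^1$ and remains even about $x=0$ and $x=L$ for all time; the Neumann condition is the $C^1$-matching at the reflection points and is automatic. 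This gives a continuous embedding $\iota\colon X_N\times H(f)\to \tilde X\times H(f)$ (with $\tilde X$ the fractional power space on $\tilde S^1$) that intertwines the skew-product semiflows, so for any minimal set $M$ of $\Pi_t^N$ the image $\tilde M=\iota(M)$ is a minimal set of the periodic skew-product semiflow on $\tilde S^1$ consisting entirely of reflection-symmetric functions. Since \textbf{(HNB)} is exactly the reflection-symmetry hypothesis of Theorem~\ref{a-p-flow}, that theorem applies to $\tilde M$ and identifies it, as a flow, with an almost periodically-forced minimal flow on $\mathbb{R}$.

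The main step is to show that the specific boundary-evaluation map $(u_0,g)\mapsto(u_0(0),g)$ is injective on $M$. Suppose $(u_1,g),(u_2,g)\in M$ satisfy $u_1(0)=u_2(0)$ but $u_1\ne u_2$. The reflected functions $\tilde u_1,\tilde u_2\in\tilde M$ are both even about $x=0$, so $w=\tilde u_1-\tilde u_2$ is even about $x=0$ with $w(0)=0$; consequently $w'(0)=0$ as well, and $x=0$ is a zero of $w$ of order at least two. By the standard drop-of-zero-number principle for linear parabolic equations this degenerate zero forces $z\bigl(\varphi(t,\cdot;\tilde u_1,g)-\varphi(t,\cdot;\tilde u_2,g)\bigr)$ to strictly decrease across $t=0$, contradicting the constancy asserted by Lemma~\ref{center-constant} on the minimal set $\tilde M$. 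Hence $u_1=u_2$, and the boundary-evaluation map is a continuous bijection from the compact set $M$ onto $M_0^N$; it is therefore a homeomorphism, and the conjugacy identity with $\pi_t^N$ is immediate from the definitions.

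Composing this boundary-evaluation homeomorphism with the homeomorphism produced by Theorem~\ref{a-p-flow} on $\tilde M$ exhibits $\pi_t^N$ on $M_0^N$ as topologically conjugate to an almost periodically-forced minimal flow on $\mathbb{R}$, finishing the proof. The crux of the argument is the injectivity step: one must convert the single-point equality $u_1(0)=u_2(0)$ into a degenerate zero of $w$ using the even symmetry produced by \textbf{(HNB)}, and then invoke the constancy of the zero number on minimal sets to obtain a contradiction; the rest is a routine transfer of dynamical structure through the reflection embedding.
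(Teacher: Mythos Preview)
Your proposal is correct and follows essentially the same route as the paper: extend by even reflection to a periodic problem on the doubled circle, note that (\textbf{HNB}) preserves the reflection symmetry, and use the Constancy Property Lemma~\ref{center-constant} on the resulting minimal set $\tilde M$ to obtain injectivity of the boundary-evaluation map. Your explicit argument for injectivity (the even symmetry forces $w'(0)=0$, so $w(0)=0$ would yield a multiple zero and hence a forbidden drop) is exactly the mechanism the paper leaves implicit when it writes ``By Lemma~\ref{center-constant}, the mapping \ldots\ is a continuous bijection.'' One minor remark: your final invocation of Theorem~\ref{a-p-flow} is unnecessary, since once you have the homeomorphism $M\to M_0^N\subset\mathbb{R}\times H(f)$ conjugating $\Pi_t^N$ to $\pi_t^N$, the flow $\pi_t^N$ is already, by definition, an almost periodically-forced minimal flow on $\mathbb{R}$.
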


Consider \eqref{equation-1} +\eqref{dirichlet-bc}. Let $X_D$ be a fractional power space associated with the operator $u\rightarrow -u_{xx}:H^{2}(0,L)\rightarrow L^{2}(0,L)$ with Dirichlet boundary condition $u(0)=u(L)=0$ such that $X_D\hookrightarrow C^{1}[0,L]$.
 Let $\Pi^D_t$ be the  (local) skew-product semiflow on $X_N\times
H(f)$ generated by \eqref{equation-1} +\eqref{dirichlet-bc},
\begin{equation}
\label{Di-skew-product}
\Pi_t^D(u_0,g)=(\phi^D(t,x;u_0,g),g\cdot t)\quad \forall\,\, u_0\in X_N,\,\, g\in H(f),
\end{equation}
where $g\in H(f)$ and $u(t,x)=\phi^D(t,x;u_0,g)$ is the solution of
\begin{equation}
\label{dirichlet-eq-bc}
\begin{cases}
u_t=u_{xx}+g(t,u,u_x),\quad 0<x<L\cr
u(t,0)=u(t,L)=0
\end{cases}
\end{equation}
 with $u(0,x)=u_0(x)$. For a given minimal set $M\subset X_D\times H(f)$, let
 $$
 M_0^D=\{(u_0^{'}(0),g)\,|\, (u_0,g)\in M\}.
 $$
 We have

 \begin{theorem}
 \label{Di-imbed}  Consider \eqref{equation-1} +\eqref{dirichlet-bc}.  Let $M$ be a minimal set of $\Pi_t^D$ and
 $$
 M_0^D=\{(u_0^{'}(0),g)\,|\, (u_0,g)\in M\}.
 $$
 Assume that {\bf (HDB1)} holds or that {\bf (HDB2)} holds and $u_0\ge 0$ for any $(u_0,g)\in M$. Then
 the mapping $M\ni(u_0,g)\to (u_0^{'}(0),g)\in M_0^D$ is a continuous bijection,
and $\Pi_t^D|_M$ is {topologically} conjugated to $\pi_t^D:M_0^D\to M_0^D$, where
$$
\pi_t^D(u_0^{'}(0),g)=(\phi^D_x(t,0;u_0,g),g\cdot t)\quad \forall\, (u_0,g)\in M.
$$
Hence, the flow on $M$ conjugates to an almost periodically-forced minimal flow on $\mathbb{R}$.
 \end{theorem}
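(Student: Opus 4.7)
The plan is to reduce the theorem to a single technical point, namely injectivity of the evaluation map $\Phi:(u_0,g)\mapsto(u_0'(0),g)$ restricted to $M$. Once this is in hand, continuity of $\Phi$ via $X_D\hookrightarrow C^1[0,L]$ together with the compactness of $M$ makes $\Phi$ a continuous bijection onto $M_0^D$, hence a homeomorphism; the commutative relation $\pi_t^D\circ\Phi=\Phi\circ\Pi_t^D|_M$ is then just a rewriting of the definition of $\pi_t^D$, and the almost-periodically-forced character of the conjugate flow is inherited from the almost-periodic base flow on $H(f)$ via the compact minimal extension $\Phi(M)=M_0^D$.

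To establish injectivity, I would suppose for contradiction that $(u_1,g),(u_2,g)\in M$ satisfy $u_1\neq u_2$ and $u_1'(0)=u_2'(0)$, and set $w(t,\cdot):=\phi^D(t,\cdot;u_1,g)-\phi^D(t,\cdot;u_2,g)$. Then $w$ solves a linear Dirichlet parabolic problem with $w(0,0)=w(0,L)=0$ and $w_x(0,0)=0$, so $x=0$ is a multiple boundary zero of $w(0,\cdot)$. By Angenent's zero-number theorem adapted to the Dirichlet boundary, the count of zeros of $w(t,\cdot)$ in $[0,L]$ must strictly drop at $t=0^+$. The remaining task is to contradict this drop by producing a constancy-of-zero-number property along the minimal set $M$.

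Under \textbf{(HDB1)} I would lift the problem to a circle. The odd extension $\tilde u(x):=-u(-x)$ on $[-L,0]$, followed by $2L$-periodic extension, identifies each Dirichlet solution with a classical solution of $u_t=u_{xx}+f(t,u,u_x)$ on $S^1=\mathbb{R}/2L\mathbb{Z}$ (this is precisely what $f(t,-u,p)=-f(t,u,p)$ guarantees), and the conditions $u(0)=u(L)=0$ make $\tilde u$ a $C^1$ function at $x=0$ and $x=L$. The image $\tilde M$ of $M$ under the odd-extension map is then a minimal set for the associated circle skew-product, because the extension and its inverse (restriction to $[0,L]$) are bicontinuous and intertwine the two dynamics. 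Lemma~\ref{center-constant} applied to $\tilde M$ with spatial shift $a=0$ now yields $z(\tilde u_1(t,\cdot)-\tilde u_2(t,\cdot))\equiv N$ for all $t\in\mathbb{R}$, whereas $(\tilde u_1-\tilde u_2)(0)=(\tilde u_1-\tilde u_2)_x(0)=0$ forces a strict drop of this zero-number at $t=0^+$ by the circle version of Angenent's theorem, the desired contradiction. Under \textbf{(HDB2)} with $u\geq 0$ on $M$, either $M$ is the trivial fiber $\{(0,g):g\in H(f)\}$, in which case the theorem is immediate, or the strong maximum principle and the Hopf boundary-point lemma give $u>0$ on $(0,L)$ and $u_x(0)>0$ for every $(u,g)\in M$ by minimality; since no symmetry is available to pass to the circle, I would instead invoke (or prove directly, following the template of Lemma~\ref{center-constant}) a Dirichlet analogue of the constancy property along $M$, and then close the argument as before through the boundary-zero drop.

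The principal obstacle is the (HDB2) case: odd extension is no longer solution-preserving, so a Dirichlet constancy-of-zero-number property has to be set up from scratch, and Angenent's theorem must be applied with care to multiple zeros occurring at the boundary point $x=0$ rather than in the interior $(0,L)$. With that ingredient in hand, the remaining steps (homeomorphism, conjugation, almost periodicity) are routine; the (HDB1) case is essentially a clean reduction to the already-established circle machinery.
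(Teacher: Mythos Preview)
Your handling of the \textbf{(HDB1)} case is essentially the paper's own argument: odd extension to $[-L,L]$ followed by an appeal to the Constancy Property Lemma on the resulting circle minimal set. One technical point you gloss over is that $C^1$ matching at $x=0,L$ is not enough; you need $\tilde u_0\in C^2$ to have a classical solution of the periodic problem. The paper checks this explicitly: from $u_t(t,0)=0$ and $f(t,-u,p)=-f(t,u,p)$ one deduces $u_{xx}(t,0)=0$ (similarly at $x=L$), so the odd extension is $C^2$ across the junctions. This is worth stating.

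The genuine gap is in the \textbf{(HDB2)} case. You correctly note that odd extension no longer preserves the equation, and you propose to build a Dirichlet version of the constancy lemma from scratch, flagging the boundary-zero issue as the principal obstacle. The paper sidesteps all of this with a one-line reduction you have missed: since $f(t,0,p)\equiv 0$, the function
\[
\tilde f(t,u,p)=\begin{cases} f(t,u,p),& u\ge 0,\\ -f(t,-u,p),& u<0,\end{cases}
\]
is $C^1$ and satisfies \textbf{(HDB1)}. Because $u_0\ge 0$ for every $(u_0,g)\in M$, replacing $f$ by $\tilde f$ does not change the dynamics on $M$, and one is back in the \textbf{(HDB1)} case already handled. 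This trick eliminates the need for any separate Dirichlet constancy argument and any delicate boundary-zero analysis. Your alternative route is plausible in principle but is left as an unresolved sketch; the reduction to \textbf{(HDB1)} is both simpler and complete.
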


\subsection{Remarks on the main results}\label{dicss}

In this subsection, we give some remarks on the above results obtained in this paper.

\smallskip

1) For equation \eqref{equation-1} with periodic boundary condition (as it stated before, equation \eqref{equation-1} can be converted into \eqref{equation-circle}), Lemma \ref{center-constant} and Theorems \ref{a-a-circle-flow}-\ref{structure-thm} extend most of our results in the previous works  \cite{SWZ,SWZ2,SWZ3} to general cases. As a matter of fact, our series of works have formal correspondences with respect to those in \cite{Shen1995114,ShenYi-2,ShenYi-TAMS95,ShenYi-JDDE96,Shen1998} for separated boundary conditions. For instance, the property that a minimal set is residually embedded into an invariant set of some almost automorphically forced circle flow corresponds to the property that a minimal set is an almost $1$-cover of the base flow in separated boundary condition cases (\cite[Theorem 2.6]{ShenYi-TAMS95}); and the trichotomy property of $\omega$-limit set resembles in the separated condition cases that $\omega$-limit set contains at most two minimal sets(\cite[Theorem 2.6]{Shen1995114}). However, the limit sets we consider here are more complicated than those in separated boundary condition cases, because the almost automorphically (periodically)-forced circle flows can be very complicated (see \cite{HuYi} and the literatures therein).

\smallskip

2) The constancy property of zero number on the minimal set $M$ (see Lemma \ref{center-constant}) is the cornerstone of our entire article, { since our main results are highly dependent on this property}. In our previous works \cite{SWZ,SWZ3} {related to the periodic boundary condition case, to ensure this constancy, we { need to assume} that the dimension of center space of minimal set is no more than $2$, some times one also needs to assume that the dimension of unstable manifold is odd. { More precisely, let $V^c(M)$, $V^u(M)$ be the associated center space and unstable space of $M$ (associated with Sacker-Sell bundles), respectively.
In \cite{SWZ}, we proved Theorem \ref{a-a-circle-flow} via the invariant manifold approach under the assumption that $\dim V^c(M)\leq 2$; and if $\dim V^c(M)=2$, we even required $\dim V^u(M)$ being odd. As a consequence, in \cite{SWZ2}, the structural Theorem \ref{structure-thm} for $\omega$-limit set $\Omega$ was proved under the analogous assumptions.} { Undoubtedly}, there are some limitations of these previous works. As a matter of fact, for a given { $\omega$-limit set or even a minimal set}, these dimension assumptions are not easy to check; { and moreover, the structure of a general $\omega$-limit or a minimal set is still far away from well understanding. To investigate these structures, it appears to us that our invariant manifolds approach (associated with the Sacker-Sell bundles) in \cite{SWZ,SWZ2,SWZ3} is sort of special and not fine enough (see \cite{P.Bates,Chow1994,Chow1991,Chow1994-2,Sacker1978,Sacker1991} for Sacker-Sell spectrum and related invariant manifolds theories of parabolic equations). Thus, to improve our previous works and solve these problems, one needs to find new ways. Fortunately, in the current paper, these qualifications can be removed.} The invariance of $S^1$-group action and the corresponding relationship between boundedness of zero number function along the nontrivial solution of \eqref{linear-equation2} and the exponential boundedness of this solution (see Lemma \ref{zero-up-control}) can help us  reach the constancy property of zero number, which in fact related to the Floquet bundles theories obtained in \cite{Chow1995}.

{ Theorem \ref{a-a-circle-flow} reveals} { that residually-embedding into an invariant set of almost automorphically-forced circle flow is an universal property} of minimal sets of \eqref{equation-lim2}. Certainly, if one { wishes to obtain a more delicate result for the structure of the $\omega$-limit set $\Omega$ (like embedding $\Omega$ into a compact invariant set of some almost periodically-forced circle flow), some restrictions { on} the dimension of center space of $\Omega$ are needed} ({ see \cite{SWZ,SWZ3} for detailed discussion). In fact, a counter-example has been provided in \cite{SWZ3}} that an $\omega$-limit set with { $2$-dimensional} center space cannot be embedded { into} a compact invariant set of almost periodically-forced circle flow.

\smallskip

3) { If in addition $f(t,u,u_x)=f(t,u,-u_x)$ (for example, $f=f(t,u)$), it was shown in \cite{SWZ2} that any minimal set $M$ is an almost $1$-cover of $H(f)$ (see Definition \ref{1-cover} for almost $1$-cover). In Theorem \ref{a-p-flow}, we proved that all points in $M$ on the same base $g\in H(f)$ are ordered; and hence, the minimal flow generated by \eqref{equation-lim2} topologically conjugates to an almost periodically forced minimal flow on $\mathbb{R}$, which is stronger than the result in \cite{SWZ2}. It { entails} that the flow on $M$ can be viewed as a minimal flow generated by an almost periodically-forced 1-dimensional ordinary differential equation, and certainly as an almost $1$-cover of $H(f)$ (see \cite[Remark 3.4]{ShenYi-TAMS95}).}

\smallskip

4) Comparing with { equation} \eqref{equation-circle} in time independent or periodic case, the asymptotic dynamics with time almost periodic dependence are more complicated. { As a matter of fact}, in autonomous system, any periodic orbit is a rotating wave $u=\phi(x-ct)$ for some $2\pi$-periodic function $\phi$ and constant $c$; and hence, $\omega(u)$ is either itself a single rotating wave, or a set of equilibria differing only by phase shift in $x$, see Massatt \cite{Massatt1986} and Matano \cite{Matano}. While, for time-periodic { system}, Sandstede and Fiedler \cite{SF1} showed that the  $\omega$-limit set $\omega(u)$ can be viewed as a subset of the two-dimensional torus carrying a linear flow. { In other words}, only case (i) in Theorem \ref{structure-thm} will happen for both autonomous and periodic systems, { that is}, the whole $\omega$-limit set can be embedded into a compact invariant set of (resp. periodically-forced) circle flow for autonomous (resp. periodic) case. However, in our current almost periodic systems, without further assumptions, $\omega$-limit set may not be embedded into a compact invariant set of some almost periodically-forced circle flow. Indeed, all the three cases in Theorem \ref{structure-thm} can happen (see examples in \cite{SWZ2}). All these facts reveal that the { almost periodically-forced} non-autonomous systems we consider here are essentially different from autonomous and periodic systems.

\smallskip
5) For \eqref{equation-1} with separated boundary conditions cases, it is proved { by Shen and Yi  \cite{ShenYi-JDDE96}} that, if the induced flow on a minimal set of the corresponding skew-product semiflow is unique ergodic, then it topologically conjugates to an almost periodically
 forced minimal flow on $\RR$.   In this paper,  under the assumption {\bf (HNB)}, or {\bf (HDB1)}, or {\bf (HNB2)},
 we prove that a general minimal set of the corresponding skew-product semiflow topologically conjugates to an almost periodically
 forced minimal flow on $\RR$. Note that
 the assumption {\bf (HNB)}, or {\bf (HDB1)}, or {\bf (HNB2)} is on the equation { itself and can be easily checked}.
\smallskip

6) Although the conclusions in this paper are on time almost-periodic systems, most of our conclusions can still be established for more general time dependent function $f$, for example, $f$  is recurrent in $t$. Indeed, { the Constancy Property Lemma} and Theorem \ref{structure-thm} { remain valid} since { their} proofs only require { the minimality of} $H(f)$, and Theorems \ref{a-p-flow}, \ref{Ne-imbed}, \ref{Di-imbed} can still be established if { one} removed ``almost periodically-forced'' { statement} in these theorems. Additionally, we give an example { at the end this paper} to remark that even for $f$ being is quasi-periodic in $t$, Theorem \ref{a-a-circle-flow} can not be { improved}.

\smallskip

7) We remark that if $f=f(t,x,u,u_x)$ in { equation} \eqref{equation-circle} depends on $x$, the asymptotic behavior of solutions of the time almost-periodic system is far from being studied. The structure of a minimal set was only given under stability assumptions (see \cite{SWZ}). The reason is due to the difficulty in zero number control on the minimal set and the lacking of invariance of $S^1$-group action. In fact, Sandstede and Fiedler in \cite{SF1} had already pointed out that chaotic behavior exhibited by any time-periodic planar vector field can also be found in certain time-periodic equation with the nonlinearity $f=f(t,x,u,u_x)$, not to mention the complexity of dynamics of almost-periodic systems.

\bigskip

The rest of this paper is organized as follows. In section 2, we list some conceptions of compact dynamical systems, almost-periodic (almost-automorphic) functions, and introduce some properties of zero number function of the linearized system associated with equation \eqref{equation-circle}. In section 3, {we prove our main results related to the periodic boundary condition case. In section 4, we obtain that for Neumann or Dirichlet boundary condition with assumption {\bf (HNB)}, or {\bf (HDB1)}, or {\bf (HNB2)}, the flow on the minimal set also topologically conjugates to an almost periodically-forced minimal flow on $\mathbb{R}$}. In the last section, we give an example of quasi-periodic ordinary equation on a circle to illustrate that the results obtained in the current paper can not be improved for general quasi-periodic equations.

\section{Preliminaries}

In this section, we introduce some concepts, notations and properties which will be often used in the next section.

\subsection{Some concepts of compact dynamical systems}\label{ScDS}
Let $Y$ be a compact metric space with metric $d_{Y}$, and
$\sigma:Y\times \RR\to Y, (y,t)\mapsto y\cdot t$ be a continuous
flow on $Y$, denoted by $(Y,\sigma)$ or $(Y,\RR)$. {A subset
$E\subset Y$ is {\it invariant} if $\sigma_t(E)=E$ for every $t\in
\RR$. A subset $E\subset Y$ is called {\it minimal} if it is
compact, invariant and the only non-empty compact invariant subset
of it is itself.  It is known that every compact and $\sigma$-invariant set contains a minimal subset and a subset $E$ is minimal if and only if every trajectory is
 dense. The continuous flow $(Y,\sigma)$ is called
{\it recurrent} or {\it minimal} if $Y$ is minimal.} A pair
$y_1,y_2$ of different elements of $Y$ are said to be {\it positively proximal} (resp. {\it negatively proximal}), if there is $t_n\to\infty$ (resp. $t_n\to-\infty$) as $n\to\infty$ such
that $d_Y(y_1\cdot t_n,y_2\cdot t_n)\to 0$, the pair $y_1,y_2$ is called {\it two sided proximal} if it is both a positively and negatively proximal pair.

\begin{definition}
\label{1-cover}
{\rm
Let $(Y,\mathbb{R})$, $(Z,\mathbb{R})$ be two continuous compact flows. $Z$ is called a {\it $1$-cover} ({\it almost $1$-cover}) of $Y$ if there is a surjective flow homomorphism $p:Z\to Y$ such that $p^{-1}(y)$ is a singleton for any $y\in Y$ (for at least one $y\in Y$).}
\end{definition}

\begin{lemma}\label{epimorphism-thm}
Let $p:(Z,\mathbb{R})\rightarrow(Y,\mathbb{R})$ be an epimorphism of flows, where $Z,Y$ are compact
metric spaces. Then the set
\begin{center}
$Y'=\{y_{0}\in Y: for \ any\ z_{0}\in p^{-1}(y_{0}),\ y\in Y \ and\ any\ sequence\
\{t_{i}\}\subset \RR \ with\ y\cdot t_{i}\rightarrow y_{0},\ there\ is\ a\ sequence\
\{z_{i}\}\in p^{-1}(y)\ such\ that\ z_{i}\cdot t_{i}\rightarrow z_{0} \}$
\end{center}
\vskip -2mm
is residual and invariant. In particular, if $(Z,\mathbb{R})$ is minimal and distal, then $Y'=Y$.
\end{lemma}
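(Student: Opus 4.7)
The plan is to identify $Y'$ with (a superset of) the set of continuity points of the set-valued map $\Phi:Y\to 2^Z$, $y\mapsto p^{-1}(y)$, and then apply a classical Fort-type residuality argument. First I would check that $\Phi$ is always upper semi-continuous: since $Z$ is compact and $p$ continuous, whenever $y_n\to y_0$ and $z_n\in p^{-1}(y_n)$, any subsequential limit $z_*$ of $\{z_n\}$ satisfies $p(z_*)=y_0$, so $z_*\in p^{-1}(y_0)$. From here I invoke (or re-derive via a countable base of $Z$) the standard fact that for an upper semi-continuous compact-valued map into a compact metric space, the set $Y_0$ of lower semi-continuity points is residual; explicitly, $Y_0=\bigcap_n\{y:\mathrm{diam}(U_n\cap p^{-1}(W))\text{-type control holds}\}$ where $\{U_n\}$ is a countable base of $Z$, expressing $Y_0$ as a $G_\delta$-dense set.

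The inclusion $Y_0\subseteq Y'$ then follows directly: if $y_0\in Y_0$, $z_0\in p^{-1}(y_0)$, $y\in Y$, and $t_i\in\RR$ with $y\cdot t_i\to y_0$, lower semi-continuity at $y_0$ yields $w_i\in p^{-1}(y\cdot t_i)$ with $w_i\to z_0$; using the flow-equivariance $p^{-1}(y\cdot t_i)=p^{-1}(y)\cdot t_i$, we may write $w_i=z_i\cdot t_i$ with $z_i\in p^{-1}(y)$, giving the required lift. Hence $Y'$ is residual.

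For the invariance of $Y'$, I would argue directly from the definition: fix $y_0\in Y'$ and $s\in\RR$, and given $z_0'\in p^{-1}(y_0\cdot s)$, $y\in Y$, $t_i\in\RR$ with $y\cdot t_i\to y_0\cdot s$, put $z_0:=z_0'\cdot(-s)\in p^{-1}(y_0)$ and observe $y\cdot(t_i-s)\to y_0$. The defining property of $Y'$ at $y_0$ produces $z_i\in p^{-1}(y)$ with $z_i\cdot(t_i-s)\to z_0$, and applying the time-$s$ homeomorphism gives $z_i\cdot t_i\to z_0\cdot s=z_0'$. Thus $y_0\cdot s\in Y'$.

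Finally, assume $(Z,\RR)$ is minimal and distal. Then $Y=p(Z)$ is a minimal distal factor, and I would appeal to the classical theorem (Furstenberg/Ellis) that any factor map between minimal distal flows is open. Openness of $p$ promotes upper semi-continuity of $\Phi$ to continuity everywhere: if $y_n\to y_0$ and $z_0\in p^{-1}(y_0)$, then for any open neighborhood $U$ of $z_0$, $p(U)$ is an open neighborhood of $y_0$, so eventually $y_n\in p(U)$, and picking $z_n\in U\cap p^{-1}(y_n)$ and shrinking $U$ yields $z_n\to z_0$. Hence $Y_0=Y$, and therefore $Y'=Y$. The main obstacle I expect is the cleanest packaging of the two external ingredients—the Fort-type residuality theorem and the openness of distal minimal extensions—since both are standard but require a careful citation or a short self-contained proof to keep the exposition tight.
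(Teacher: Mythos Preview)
Your argument is correct. The paper itself does not give a proof of this lemma: it simply cites \cite[Lemma I.2.16]{Shen1998} and the remarks following it. What you have written is a self-contained proof along the standard lines one would expect in that reference---upper semicontinuity of the fiber map $y\mapsto p^{-1}(y)$, Fort's theorem to get a residual set of continuity points, flow-equivariance to transport lifts, and openness of factor maps between minimal distal flows for the final clause. Each step is sound; in particular your inclusion $Y_0\subseteq Y'$ and the invariance computation are clean. The only cosmetic point is that your description of $Y_0$ as an explicit $G_\delta$ (the line with ``$\mathrm{diam}(U_n\cap p^{-1}(W))$-type control'') is left vague; if you keep a self-contained write-up you should either state Fort's theorem precisely and cite it, or spell out the $G_\delta$ description properly. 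Otherwise nothing is missing.
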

\begin{proof}
See \cite[Lemma I.2.16]{Shen1998} and the remarks below it.
\end{proof}

Let $X,Y$ be metric spaces and $(Y,\sigma)$ be a compact flow (called the base flow). Let also
 $\RR^+=\{t\in \RR:t\ge 0\}$. A
 {\it skew-product semiflow} $\Pi_{t}:X\times Y\rightarrow X\times Y$ is a semiflow
 of the following form
 \begin{equation}\label{skew-product-semiflow}
 \Pi_{t}(u,y)=(\varphi(t,u,y),y\cdot t),\quad t\geq 0,\, (u,y)\in X\times Y,
 \end{equation}
satisfying (i) $\Pi_{0}={\rm Id}_X$ and (ii)
$\varphi(t+s,u,y)=\varphi(s,\varphi(t,u,y),y\cdot t)$ for each $(u,y)\in X\times Y$ and $s,t\in \RR^+$.
 A subset $A\subset
X\times Y$ is {\it positively invariant} if $\Pi_{t}(A)\subset A$ for
all $t\in \RR^+$. The {\it forward orbit} of any $(u,y)\in X\times
Y$ is defined by $\mathcal{O}^+(u,y)=\{\Pi_{t}(u,y):t\ge 0\}$, and the
{\it $\omega$-limit set} of $(u,y)$ is defined by
$\omega(u,y)=\{(\hat{u},\hat{y})\in X\times Y:\Pi^{t_n}(u,y)\to
(\hat{u},\hat{y}) (n\to \infty) \textnormal{ for some sequence
}t_n\to \infty\}$.

A {\it flow extension} of a skew-product semiflow $\Pi_{t}$
 is a continuous skew-product flow $\hat{\Pi}_t$ such that $\hat{\Pi}_t(u,y)=\Pi_{t}(u,y)$ for each
$(u,y)\in X\times Y$ and $t\in \RR^+$. A compact positively
invariant subset is said to admit {\it a flow extension} if the
semiflow restricted to it does. Actually, a compact positively
invariant set $K\subset X\times Y$ admits a flow extension if every
point in $K$ admits a unique backward orbit which remains inside the
set $K$ (see \cite[part II]{Shen1998}). A compact positively
invariant set $K\subset X\times Y$ for $\Pi_{t}$ is called {\it minimal} if it
does not contain any other nonempty compact positively invariant set
than itself.

\begin{definition}
\label{conjugate-def}
  {\rm
  Two flows $(Y,\mathbb{R})$ and $(Z,\mathbb{R})$ are said to be {\it topologically conjugate} if there is a homeomorphism $h:Y\to Z$ such that $h(y\cdot t)=h(y)\cdot t$ for all $y\in Y$ and $t\in \mathbb{R}$.}
\end{definition}
\begin{definition}\label{residual-imbed}
{\rm Let $X_1$ be a metric space. A minimal subset $M\subset X\times Y$ (here $X$, $Y$ are metric spaces) is said to be {\it residually
embedded} into $X_1\times Y$, if there exist a residual invariant set $Y_*\subset Y$ and a flow $\hat{\Pi}_t$ defined on some subset $\hat{M}\subset X_1\times Y_*$ such that the flow $\Pi_{t}|_{M\cap p^{-1}(Y_*)}$ is topologically conjugate to $\hat{\Pi}_t$  on ${\hat{M}}$.
Moreover, when $Y_*=Y$, we call $M$ is {\it embedded} into $X_1\times Y$.
}\end{definition}

\subsection{Almost periodic (automorphic) functions (minimal flows)}

Assume $D$ is a nonempty subset of $\RR^m$. We list some necessary definitions and properties related to almost periodic (automorphic) functions in the following.
\begin{definition}\label{admissible}
A function $f\in C(\RR\times D,\RR)$ is said to be {\rm  admissible} if for any compact subset $K\subset D$, $f$ is bounded and uniformly continuous on
$\RR\times K$. $f$ is {\rm $C^r$ ($r\ge 1$)  admissible} if $f(t,w)$ is $C^r$ in $w\in D$ and Lipschitz in $t$, and $f$ as well as its partial derivatives to order $r$ are admissible.
\end{definition}

Let $f\in C(\RR\times D,\RR)$ be an admissible function. Then
$H(f)={\rm cl}\{f\cdot\tau:\tau\in \RR\}$ (called the {\it hull of
$f$}) is compact and metrizable under the compact open topology (see \cite{Sell,Shen1998}), where $f\cdot\tau(t,\cdot)=f(t+\tau,\cdot)$. Moreover, the time translation $g\cdot t$ of $g\in H(f)$ induces a natural
flow on $H(f)$ (cf. \cite{Sell}).

\begin{definition}\label{almost}
{\rm
\begin{itemize}

\item[(1)] A function $f\in C(\RR,\RR)$ is {\it almost periodic} if for every $\{t'_k\}\subset\mathbb{R}$ there is a subsequence $\{t_k\}\subset \{t'_k\}$
such that $\{f(t+t_k)\}$ converges uniformly.

\item[(2)] $f\in C(\RR,\RR)$ is {\it almost automorphic} if for every $\{t'_k\}\subset\mathbb{R}$ there is a subsequence $\{t_k\}\subset \{t'_k\}$
and a function $g:\mathbb{R}\to \mathbb{R}$ such that $f(t+t_k)\to g(t)$ and $g(t-t_k)\to f(t)$ pointwise.

\item[(3)]  A function $f\in C(\RR\times D,\RR)(D\subset \RR^m)$ is {\it  uniformly almost periodic in $t$} (resp. {\it uniformly almost automorphic in $t$}), if $f$ is both admissible and, for each fixed $d\in D$, $f(t,d)$ is  almost periodic (resp. almost automorphic)  with respect to $t\in \RR$.
\end{itemize}
}
\end{definition}

\begin{remark}\label{a-p-to-minial}
{\rm  If $f$ is a uniformly almost periodic (automorphic) function in $t$, then $H(f)$ is always minimal, and $(H(f),\mathbb{R})$ is an almost periodic (automorphic) minimal flow.
 Moreover, $g$ is a uniformly almost periodic (automorphic) function for all (residually many)
$g\in H(f)$ (see, e.g. \cite{Shen1998}).}
\end{remark}

\subsection{Zero number function for parabolic equations on $S^1$}
Given a $C^{1}$-smooth function $u:S^{1}\rightarrow \mathbb{R}$, the zero number of $u$ is
 defined as
$$z(u(\cdot))={\rm card}\{x\in S^{1}|u(x)=0\}.$$
The following key lemma describes the behavior of the zero number for linear non-autonomous parabolic equations, which was originally presented in \cite{2038390,H.MATANO:1982} and improved in \cite{Chen98}.
\begin{lemma}\label{zero-number}
Consider the linear system
\begin{equation}\label{linear-equation}
\begin{cases}
\varphi_{t}=\varphi_{xx}+b \varphi_{x}+c\varphi,\quad x\in S^1,\\
\varphi_{0}=\varphi(0,\cdot)\in H^{1}(S^{1}),
\end{cases}
\end{equation}
where the coefficients $b,c$ are allowed to depend on $t$ and $x$ such that
$b,b_{t},b_{x},c\in L_{loc}^\infty$. Let $\varphi(t,x)$ be a nontrivial solution of \eqref{linear-equation}.
Then the following properties holds.\par
{\rm (a)} $z(\varphi(t,\cdot))<\infty,\forall t>0$ and is non-increasing in t.\par
{\rm (b)} $z(\varphi(t,\cdot))$ drops at $t_{0}$ if, and only if, $\varphi(t_{0},\cdot)$ has a
multiple zero in $S^{1}$.\par
{\rm (c)}  $z(\varphi(t,\cdot))$ can drop only finite many times, and there exists a $T>0$
such that $\varphi(t,\cdot)$ has only simple zeros in $S^{1}$ as $t\geq T$(hence
$z(\varphi(t,\cdot))=constant$ as $t\geq T$).
\end{lemma}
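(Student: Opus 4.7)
The plan is to follow the classical Sturmian approach of Angenent \cite{2038390}, Matano \cite{H.MATANO:1982}, and Chen \cite{Chen98}. First I would upgrade the regularity of $\varphi$: under the hypotheses $b,b_t,b_x,c\in L^{\infty}_{\rm loc}$, interior parabolic estimates give $\varphi\in C^{2,1}$ for $t>0$, so the notion of ``order of a zero'' is well defined. Then I would argue that $z(\varphi(t,\cdot))<\infty$ for every $t>0$: if $\varphi(t,\cdot)$ had a non-isolated zero, it would have a zero of infinite order at some $x_0\in S^1$, and backward unique continuation for this linear parabolic equation would force $\varphi\equiv 0$, a contradiction. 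Since $S^1$ is compact, the isolated zero set is finite.

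Next, for (a) and (b) I would analyze the local structure of the zero set at each $(t_0,x_0)$ with $\varphi(t_0,x_0)=0$. If the zero is simple ($\varphi_x(t_0,x_0)\neq 0$), the implicit function theorem produces a unique smooth curve $x=\xi(t)$ of simple zeros through $(t_0,x_0)$, so simple zeros neither disappear nor split in a neighborhood of $t_0$. Hence a drop of $z$ at $t_0$ forces at least one multiple zero. For the converse, suppose $\varphi(t_0,\cdot)$ has a zero of order $k\ge 2$ at $x_0$. The technical core is a parabolic self-similar rescaling $y=(x-x_0)/\sqrt{|t-t_0|}$: the lower-order coefficients $b,c$ drop out to leading order, and the asymptotics of $\varphi$ is governed by the heat equation with polynomial data of degree $k$, i.e.\ by the $k$-th Hermite polynomial. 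A direct count shows that near $x_0$ the number of zeros strictly decreases as $t$ crosses $t_0$. Combining this local resolution with the persistence of simple zeros elsewhere in $S^1$ yields both the monotonicity in (a) and the biconditional in (b).

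Finally, (c) is an immediate consequence of (a) and (b): each drop decreases the non-negative integer $z(\varphi(t,\cdot))$ by at least one, so there are at most finitely many drops. Past the last drop at some time $T$, every zero of $\varphi(t,\cdot)$ is simple; the implicit function theorem then tracks the zeros as disjoint smooth curves, none of which can collide (else a multiple zero would appear, contradicting (b)), so $z(\varphi(t,\cdot))$ is constant on $[T,\infty)$.

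The main obstacle is the rescaling analysis at a multiple zero, where one must control the error contributions of $b,c$ and the higher-order Taylor remainder on a shrinking parabolic scale. I would handle this along the lines of Chen \cite{Chen98}: write $\varphi(t,x)=P_k(t,x-x_0)+R(t,x)$ with $P_k$ the order-$k$ Taylor-like polynomial solving the frozen principal part and $R$ of higher parabolic order, then show $R$ is too small to create or cancel zeros in the relevant cylinder. Once this local picture is secured, the global statements (a)–(c) assemble by patching local zero-curves with the finite resolution at each multiple zero.
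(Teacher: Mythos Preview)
Your outline is correct and follows the classical Angenent--Matano--Chen argument. Note, however, that the paper does not supply its own proof of this lemma at all: it is stated as a known result and attributed directly to \cite{2038390,H.MATANO:1982} with the sharpened coefficient hypotheses from \cite{Chen98}. So your proposal is not so much ``the same approach as the paper'' as it is a faithful sketch of the proofs in the very references the paper cites in lieu of a proof.
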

Immediately, we have the following
\begin{lemma}\label{difference-lapnumber}
For any $g\in H(f)$, let $\varphi(t,\cdot;u,g)$ and $\varphi(t,\cdot;\hat{u},g)$ be two
distinct solutions of {\rm (\ref{equation-lim1})} on
$ \mathbb{R}^+$. Then

{\rm (a)} $z(\varphi(t,\cdot;u,g)-\varphi(t,\cdot;\hat{u},g))<\infty$ for $t>0$ and is non-increasing in t;

{\rm (b)} $z(\varphi(t,\cdot;u,g)-\varphi(t,\cdot;\hat{u},g))$
strictly decreases at $t_0$ if, and only if, $\varphi(t_0,\cdot;u,g)-\varphi(t_0,\cdot;\hat{u},g)$ has a multiple
zero on $S^1$;

{\rm (c)} $z(\varphi(t,\cdot;u,g)-\varphi(t,\cdot;\hat{u},g))$ can drop only finite many times, and there exists a $T>0$ such that  $$z(\varphi(t,\cdot;u,g)-\varphi(t,\cdot;\hat{u},g))\equiv
\textnormal{constant}$$ for all $t\ge T$.

\end{lemma}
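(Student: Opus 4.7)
The plan is to reduce Lemma \ref{difference-lapnumber} directly to Lemma \ref{zero-number} by writing the difference of the two solutions as a solution of a linear parabolic equation of the type covered there. First I would set $\varphi_1(t,x):=\varphi(t,x;u,g)$, $\varphi_2(t,x):=\varphi(t,x;\hat u,g)$, and $\psi:=\varphi_1-\varphi_2$. Subtracting the two copies of \eqref{equation-lim1} yields
$$\psi_t=\psi_{xx}+g(t,\varphi_1,\varphi_{1,x})-g(t,\varphi_2,\varphi_{2,x}).$$
Applying the fundamental theorem of calculus along the segment joining $(\varphi_2,\varphi_{2,x})$ to $(\varphi_1,\varphi_{1,x})$ rewrites the bracketed term as $c(t,x)\psi+b(t,x)\psi_x$, where
$$c(t,x)=\int_0^1 g_u\bigl(t,s\varphi_1+(1-s)\varphi_2,s\varphi_{1,x}+(1-s)\varphi_{2,x}\bigr)\,ds,$$
and $b(t,x)$ is defined by the analogous formula with $g_p$ in place of $g_u$. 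Consequently $\psi$ satisfies a linear parabolic equation of the form \eqref{linear-equation}, and $\psi\not\equiv 0$ by uniqueness of the Cauchy problem for \eqref{equation-lim1} combined with $u\ne\hat u$.

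Next I would verify the coefficient regularity required by Lemma \ref{zero-number}, namely $b,b_t,b_x,c\in L_{loc}^\infty$. Local boundedness of $b$ and $c$ is immediate from continuity of $g_u,g_p$, since $g\in H(f)$ inherits the regularity of $f$ while each $\varphi_i$ stays in a compact subset of $X\hookrightarrow C^1(S^1)$ on every interval $[\delta,\infty)$ with $\delta>0$. To obtain $b_t,b_x\in L_{loc}^\infty$, I would differentiate under the integral sign and invoke parabolic smoothing: for $t>0$ the $\varphi_i$ are classical solutions, so the spatial and temporal derivatives that appear after differentiation are locally bounded on $(0,\infty)\times S^1$. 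Lemma \ref{zero-number} applied to $\psi$ then delivers the three conclusions (a), (b), (c) of the present lemma verbatim.

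The only mildly technical step, and thus the main obstacle, is the regularity verification for $b_t$ and $b_x$: these expressions formally involve second partial derivatives of $g$, which must be controlled either by upgrading to the $C^2$-admissibility hypothesis that the authors invoke elsewhere, or by routine parabolic Schauder-type estimates that supply the required local bounds on the higher derivatives of $\varphi_i$ once one restricts to $t\ge\delta>0$. Beyond this regularity check the argument is essentially a one-line reduction to Lemma \ref{zero-number}.
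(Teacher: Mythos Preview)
Your approach is correct and is exactly what the paper does: it states Lemma \ref{difference-lapnumber} with the one-word justification ``Immediately'' right after Lemma \ref{zero-number}, meaning the intended argument is precisely your linearization of the difference $\psi=\varphi_1-\varphi_2$ via the integral mean-value formula. Regarding your flagged obstacle on the regularity of $b_t,b_x$: note that the paper cites Chen \cite{Chen98} as the improved version of the zero-number lemma, and Chen's result in fact only requires $b,c\in L^\infty_{loc}$ (not $b_t,b_x$), so under the standing $C^1$-admissibility of $f$ the hypotheses are met without invoking $C^2$-admissibility or higher Schauder estimates.
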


\begin{lemma}\label{zero-cons-local}
Let $u\in X$ be such that $u$ has only simple zeros on $S^1$, then there exists a $\delta>0$ such that for any $v\in X$ with $\|v\|<\delta$, one has
  \begin{equation*}
    z(u)=z(u+v).
  \end{equation*}
\end{lemma}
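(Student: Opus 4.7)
The plan is to exploit the compactness of $S^1$ together with the continuous embedding $X\hookrightarrow C^1(S^1)$ to run a standard transversality/IVT argument. Since $u$ has only simple zeros on the compact manifold $S^1$, the zero set is finite; denote it by $\{x_1,\dots,x_n\}$ with $u'(x_i)\neq 0$ for each $i$. The idea is to isolate each zero in a small arc where monotonicity of $u$ persists under small $C^1$-perturbations, and to keep $|u|$ bounded away from zero off these arcs.

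\medskip

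More precisely, I would proceed in the following steps. First, by continuity of $u'$, pick pairwise disjoint closed arcs $I_i\subset S^1$ centered at $x_i$ on which $u'$ keeps a constant strict sign, so that $u|_{I_i}$ is strictly monotone. Let $K=S^1\setminus\bigcup_i \mathrm{int}(I_i)$; then $K$ is compact and contains no zero of $u$, hence
\[
m:=\min_{x\in K}|u(x)|>0,\qquad m_0:=\min_{i}\min_{x\in\partial I_i}|u(x)|>0,\qquad \mu:=\min_i\min_{x\in I_i}|u'(x)|>0.
\]
Using the continuous embedding $X\hookrightarrow C^1(S^1)$, fix $C>0$ with $\|v\|_{C^1(S^1)}\le C\|v\|$ for all $v\in X$, and choose
\[
\delta<\tfrac{1}{C}\min\{m,\,m_0,\,\mu\}.
\]

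\medskip

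Now suppose $v\in X$ satisfies $\|v\|<\delta$, so that $\|v\|_{C^0}, \|v'\|_{C^0}<C\delta$. On $K$ we have $|u+v|\ge |u|-|v|>m-C\delta>0$, so $u+v$ has no zeros on $K$. On each $I_i$, $(u+v)'$ has the same strict sign as $u'$ because $|v'|<C\delta<\mu$, so $u+v$ is strictly monotone on $I_i$; at the two endpoints of $I_i$ we have $|u|\ge m_0>C\delta>|v|$, so $u+v$ inherits the opposite signs that $u$ takes at $\partial I_i$. By the intermediate value theorem, $u+v$ has exactly one (simple) zero in $I_i$. Summing over $i$, $z(u+v)=n=z(u)$.

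\medskip

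There is no serious obstacle here; the only subtlety is that the perturbation must be controlled in the $C^1$-norm (not merely the sup norm) in order to preserve the sign of the derivative on each $I_i$, and this is exactly what the hypothesis $X\hookrightarrow C^1(S^1)$ delivers.
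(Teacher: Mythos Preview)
Your proof is correct and is precisely the standard transversality argument behind this statement; the paper itself does not give a proof but simply cites \cite[Corollary~2.1]{SF1} and \cite[Lemma~2.3]{Chen1989160}, where the same idea (finite simple zeros, local monotonicity, control on the complement, and the embedding into $C^1$) is used. One minor redundancy: since $\partial I_i\subset K$, you automatically have $m_0\ge m$, so the quantity $m_0$ is superfluous and $\delta<\tfrac{1}{C}\min\{m,\mu\}$ already suffices.
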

\begin{proof}
See Corollary 2.1 in \cite{SF1} or Lemma 2.3 in \cite{Chen1989160}.
\end{proof}

The proof of the following lemma can be found in \cite[Lemma 2.5]{SWZ}.
\begin{lemma}\label{sequence-limit}
Fix $g,\ g_{0}\in H(f)$. Let $(u^{i},g)\in p^{-1}(g),(u_{0}^{i},g_{0})\in p^{-1}(g_{0})$  $ (i=1,\ 2,\ u^{1}\neq u^{2},\ u_{0}^{1}\neq u_{0}^{2})$ be such that $\Pi_{t}(u^{i},g)$ is defined on $\mathbb{R}^{+}$ (resp. $\mathbb{R}^-$) and $\Pi_{t}(u_{0}^{i},g_{0})$ is defined on $\mathbb{R}$. If there exists a sequence $t_{n}\rightarrow +\infty$ (resp. $s_{n}\rightarrow -\infty$) as $n\rightarrow \infty$, such that $\Pi_{t_{n}}(u^{i},g)\rightarrow (u_{0}^{i},g_{0})$ (resp. $\Pi_{s_{n}}(u^{i},g)\rightarrow (u_{0}^{i},g_{0})$) as $n\rightarrow \infty (i=1,2)$, then
$$z(\varphi(t,\cdot;u_{0}^{1},g_{0})-\varphi(t,\cdot;u_{0}^{2},g_{0}))\equiv \textnormal{constant},$$
for all $t\in \mathbb{R}$.
\end{lemma}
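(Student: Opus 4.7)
The plan is to argue by contradiction, combining the monotonicity of the zero number with a pigeonhole trick over well-spaced indices of the subsequence $\tau_n$ (where $\tau_n = t_n$ in the forward case and $\tau_n = s_n$ in the backward case); this uniform treatment of both time directions is what I expect to be the main technical point, since a naive ``eventually constant zero number along $\psi$'' argument only works in the forward direction. Set
$$
\psi(t,\cdot) := \varphi(t,\cdot;u^1,g) - \varphi(t,\cdot;u^2,g), \qquad \phi(t,\cdot) := \varphi(t,\cdot;u_0^1,g_0) - \varphi(t,\cdot;u_0^2,g_0),
$$
on their respective intervals of existence (so $\psi$ is defined on $\mathbb{R}^+$ or $\mathbb{R}^-$ depending on the case, while $\phi$ is defined on all of $\mathbb{R}$). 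Each satisfies a linear parabolic equation of the form covered by Lemma \ref{zero-number}, obtained by mean-value subtraction of two solutions of \eqref{equation-lim1}; nontriviality at time $0$ comes from the assumptions $u^1 \neq u^2$ and $u_0^1 \neq u_0^2$, and it persists everywhere by backward uniqueness for linear parabolic equations on $S^1$.

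Applying Lemma \ref{difference-lapnumber}(c) to $\phi$ on $[0,\infty)$ produces $T > 0$ and $N_0 \in \mathbb{N}$ with $z(\phi(t,\cdot)) \equiv N_0$ and all zeros simple for every $t \geq T$. Suppose toward contradiction that $z(\phi(\cdot,\cdot))$ is not constant on $\mathbb{R}$. Monotonicity then supplies $s_1 < s_2$ with $z(\phi(s_1,\cdot)) > z(\phi(s_2,\cdot))$, and because Lemma \ref{zero-number}(c) allows only finitely many multiple-zero times of $\phi$ in any bounded interval, I can perturb slightly to obtain $t_1 < t_2$ for which $\phi(t_1,\cdot)$ and $\phi(t_2,\cdot)$ both have only simple zeros while still $A := z(\phi(t_1,\cdot)) > z(\phi(t_2,\cdot)) =: B$. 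For any fixed $t \in \mathbb{R}$ and $n$ large enough that $\tau_n + t$ belongs to the existence interval of $\psi$, the semiflow identity $\Pi_t \Pi_{\tau_n}(u^i,g) = \Pi_{\tau_n+t}(u^i,g)$ combined with $\Pi_{\tau_n}(u^i,g) \to (u_0^i,g_0)$ and continuous dependence yields $\psi(\tau_n + t,\cdot) \to \phi(t,\cdot)$ in $X \hookrightarrow C^1(S^1)$. Since $\phi(t_i,\cdot)$ has only simple zeros, Lemma \ref{zero-cons-local} then gives $z(\psi(\tau_n+t_1,\cdot)) = A$ and $z(\psi(\tau_n+t_2,\cdot)) = B$ for every sufficiently large $n$.

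The contradiction now comes from well-spacing the indices. Extract a monotone subsequence (still called $\tau_n$) with $|\tau_{n+1} - \tau_n| > t_2 - t_1$; all four points $\tau_n + t_1,\ \tau_n + t_2,\ \tau_{n+1} + t_1,\ \tau_{n+1} + t_2$ lie in the domain of $\psi$ for $n$ large. In the forward case they are ordered as $\tau_n + t_1 < \tau_n + t_2 < \tau_{n+1} + t_1 < \tau_{n+1} + t_2$, and in the backward case as $\tau_{n+1} + t_1 < \tau_{n+1} + t_2 < \tau_n + t_1 < \tau_n + t_2$; in either case the two ``middle'' points carry $z$-value $B$ at the earlier one and $A$ at the later one. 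The non-increasing property of $z(\psi(\cdot,\cdot))$ from Lemma \ref{difference-lapnumber}(a) then forces $B \geq A$, contradicting $A > B$. Therefore $z(\phi(t,\cdot))$ must be constant in $t$ on all of $\mathbb{R}$, which is the claim.
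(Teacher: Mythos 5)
The paper does not actually prove Lemma \ref{sequence-limit}; it refers to \cite[Lemma 2.5]{SWZ}. Your self-contained argument is essentially correct and close in spirit to that proof: both hinge on transporting zero-number information from the approximating orbit $\psi$ to the limiting orbit $\phi$ via Lemma \ref{zero-cons-local} at times where $\phi$ has only simple zeros. The standard route is slightly more direct than yours: Lemma \ref{difference-lapnumber}(c) applied to $\psi$ gives a single integer $N^*$ with $z(\psi(s,\cdot))=N^*$ for all $s$ beyond some $T^*$ (resp.\ before $-T^*$), so for every $t$ at which $\phi(t,\cdot)$ has only simple zeros one gets $z(\phi(t,\cdot))=\lim_n z(\psi(\tau_n+t,\cdot))=N^*$ in one stroke; since the exceptional times are locally finite and $z(\phi(\cdot,\cdot))$ is non-increasing, constancy $\equiv N^*$ on all of $\mathbb{R}$ follows. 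Your pigeonhole over well-spaced indices reaches the same contradiction and is valid; in particular your observation that the cross-index comparison is what makes the forward case work (a single $n$ only yields $A\ge B$, not a contradiction) is exactly right.

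The one step you should not wave off as ``continuous dependence'' is the convergence $\psi(\tau_n+t,\cdot)\to\phi(t,\cdot)$ for \emph{negative} $t$, which you genuinely need since $t_1,t_2$ may be negative. For $t\ge 0$ this is continuity of the skew-product semiflow, but $\Pi_t$ is only a semiflow, so for $t<0$ you must argue separately: the sets $\{\varphi(\tau_n+t,\cdot;u^i,g)\}_n$ are precompact in $X$ (parabolic smoothing applied to a bounded orbit segment --- boundedness is automatic in every application of the lemma, where $(u^i,g)$ lies in a compact invariant set, though it is not literally among the stated hypotheses), and any subsequential limit $v^i$ satisfies $\varphi(-t,\cdot;v^i,g_0\cdot t)=u_0^i$, whence $v^i=\varphi(t,\cdot;u_0^i,g_0)$ by backward uniqueness for parabolic equations --- the same tool you already invoked for the nontriviality of $\phi$. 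With that step filled in, your proof is complete.
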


Consider the following linear parabolic equation:
\begin{equation}\label{linear-equation2}
\psi_t=\psi_{xx}+a(\omega\cdot t,x)\psi_x+b(\omega\cdot t,x)\psi,\,\,t>0,\,x\in S^{1},
\end{equation}
where $\omega\in E$, $\omega\cdot t$ is a flow on a compact  metric space $E$; and $a^\omega(t,x):=a(\omega\cdot t,x)$,  $b^\omega(t,x):=b(\omega\cdot t,x)$ are continuously differentiable in $(t,x)$; and $a^\omega,a^\omega_t,a^\omega_x,b^\omega:\mathbb{R}\times S^1\to \mathbb{R}$ are bounded functions uniformly for $\omega\in E$.

For any $w\in L^2(S^1)$, let $\psi(t,x;w, \omega)$ be the solution  of \eqref{linear-equation2} with $\psi(0,x;w,\omega)=w(x),x\in S^1$.
\begin{definition}
{\rm An {\it exponentially bounded solution} $\psi(t,\cdot;w, \omega)$ of \eqref{linear-equation2} is a solution $\psi:\mathbb{R}\to L^2(S^1)$ defined for $t\in\mathbb{R}$  and there exist positive constants $K_1,K_2$ such that
\begin{equation}
    \|\psi(t,\cdot;\cdot, \cdot)\|_{L^2(S^1)}\leq K_1 e^{K_2|t|},\quad \forall t\in\mathbb{R}.
\end{equation}
}
\end{definition}

\begin{lemma}\label{zero-up-control}
 Assume $\psi(t,\cdot;w, \omega)$ be a nontrivial exponentially bounded solution of \eqref{linear-equation2}. Then, there exists some positive integer $N$ such that
\begin{equation*}
    z(\psi(t,\cdot;w, \omega))\leq N,\quad \forall t\in\mathbb{R}.
\end{equation*}
\end{lemma}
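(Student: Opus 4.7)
The plan is to argue by contradiction, converting the assumption of unboundedly many zeros into a super-exponential backward-in-time growth of $\|\psi(t,\cdot)\|_{L^2}$ that contradicts the exponential a priori bound. Since Lemma \ref{zero-number}(a) applies to $\psi$ restricted to $[t_0,\infty)$ for any $t_0\in\mathbb{R}$, the zero number $z(\psi(t,\cdot))$ is finite for every $t\in\mathbb{R}$ and non-increasing in $t$. Thus it suffices to bound $z(\psi(t,\cdot))$ as $t\to -\infty$. Assume for contradiction that there exists a sequence $s_n\to -\infty$ with $m_n:=\lfloor z(\psi(s_n,\cdot))/2\rfloor\to\infty$; by monotonicity, $z(\psi(t,\cdot))\geq 2m_n$ for all $t\leq s_n$.

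The first ingredient is a standard $L^2$-energy estimate. Multiplying \eqref{linear-equation2} by $\psi$, integrating over $S^1$, using the periodicity of $\psi$ to integrate by parts, and applying Young's inequality to the drift term $a^{\omega}\psi\psi_x$, I would derive
\begin{equation*}
\frac{d}{dt}\|\psi(t,\cdot)\|_{L^2(S^1)}^{2}\;\leq\;-\|\psi_x(t,\cdot)\|_{L^2(S^1)}^{2}+C_0\,\|\psi(t,\cdot)\|_{L^2(S^1)}^{2},
\end{equation*}
where $C_0$ depends only on the uniform bounds on $a^{\omega},a^{\omega}_x,b^{\omega}$.

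The second ingredient is a nodal Rayleigh-quotient lower bound: there exists $c=c(L)>0$ such that any $u\in H^1(S^1)$ with at least $2m$ sign changes satisfies $\|u_x\|_{L^2}^{2}\geq c\,m^{2}\|u\|_{L^2}^{2}$. Here one writes $S^1=\bigcup_{j=1}^{2m}I_j$ with consecutive simple zeros as endpoints, applies the Dirichlet--Poincar\'{e} inequality $\|u_x\|_{L^2(I_j)}^2\geq(\pi/|I_j|)^2\|u\|_{L^2(I_j)}^2$ on each nodal arc, and combines these via a weighted Cauchy--Schwarz / extremal argument using $\sum|I_j|=L$ and the structure of the minimizers (which are perturbations of a sum of the lowest mode and the $m$-th mode, giving the $m^2$ scaling). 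Alternatively, the estimate is a consequence of the Floquet bundle decomposition for linear parabolic equations on $S^1$ developed in \cite{Chow1995}, in which the skew-product state space splits into invariant subbundles $W_k$ with constant zero number and Lyapunov exponents $\lambda_k\to-\infty$. I expect this to be the main technical obstacle, because naively the nodal arcs can have very unequal lengths and a crude per-arc Poincar\'{e} bound degenerates to an $m$-independent constant $(\pi/L)^2$.

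Combining the two ingredients, on the interval $(-\infty,s_n]$ we have
\begin{equation*}
\frac{d}{dt}\|\psi(t,\cdot)\|_{L^2}^{2}\;\leq\;-(cm_n^{2}-C_0)\,\|\psi(t,\cdot)\|_{L^2}^{2},
\end{equation*}
so backward integration from $s_n$ yields
\begin{equation*}
\|\psi(t,\cdot)\|_{L^2}^{2}\;\geq\;\|\psi(s_n,\cdot)\|_{L^2}^{2}\,\exp\bigl((cm_n^{2}-C_0)(s_n-t)\bigr),\quad t\leq s_n.
\end{equation*}
Since $\psi$ is nontrivial, $\|\psi(s_n,\cdot)\|_{L^2}>0$ (for all but possibly countably many $s_n$, which is harmless). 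Letting $t\to-\infty$, the right-hand side grows like $\exp((cm_n^{2}-C_0)|t|)$, whereas the hypothesis forces $\|\psi(t,\cdot)\|_{L^2}^2\leq K_1^{2}e^{2K_2|t|}$. Choosing $n$ so large that $cm_n^{2}-C_0>2K_2$ produces a contradiction, establishing the uniform bound $z(\psi(t,\cdot))\leq N$ for some integer $N$.
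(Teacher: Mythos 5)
There is a genuine gap, and it sits exactly where you flagged it: your ``second ingredient'' is false as stated. The claim that any $u\in H^1(S^1)$ with at least $2m$ sign changes satisfies $\|u_x\|_{L^2}^2\ge c\,m^2\|u\|_{L^2}^2$ fails because nothing prevents essentially all of the $L^2$ mass from sitting on a single long nodal arc. Concretely, take $u$ to be a half-sine bump of amplitude $1$ on an arc of length $L/2$ and a rapidly oscillating perturbation of amplitude $\epsilon\ll 1/m$ with $2m-1$ interior zeros on the complementary arc; then $z(u)\ge 2m$ while the Rayleigh quotient $\|u_x\|_{L^2}^2/\|u\|_{L^2}^2$ stays of order $(\pi/L)^2$, uniformly in $m$. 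The per-arc Dirichlet--Poincar\'{e} bounds only yield $\|u_x\|_{L^2}^2\ge (\pi/\max_j|I_j|)^2\|u\|_{L^2}^2$, and no weighted Cauchy--Schwarz or extremal argument can recover an $m^2$ scaling, because the infimum of the constrained Rayleigh quotient genuinely does not grow with $m$. Consequently the differential inequality $\frac{d}{dt}\|\psi\|_{L^2}^2\le -(cm_n^2-C_0)\|\psi\|_{L^2}^2$ is not available, and the backward-in-time super-exponential growth never materializes. Your first ingredient (the energy estimate) and the reduction to $t\to-\infty$ via monotonicity of $z$ are both fine; the failure is confined to, but fatal at, this one step.

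The underlying issue is that the lemma is not a pointwise-in-time spectral statement but a dynamical one: the zero number controls backward growth only through the invariant splitting of the linear flow. This is precisely what the Floquet bundle theory of Chow, Lu and Mallet-Paret provides --- the state space of \eqref{linear-equation2} decomposes into invariant subbundles $W_k$ with constant zero number and Lyapunov exponents $\lambda_k\to-\infty$ (roughly like $-k^2$), so an exponentially bounded entire solution must lie in a finite sum $\bigoplus_{k\le N}W_k$ and hence has $z\le N$ for all time. The paper proves the lemma simply by citing \cite[Corollary 4.5 and Section 9]{Chow1995}; your ``alternative'' fallback to that same decomposition is in effect the paper's proof, but it does not rescue the nodal inequality you wanted (which remains false) --- it replaces the whole argument. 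To make your write-up correct you would either have to import the Floquet bundle machinery wholesale, or find a time-averaged version of the spectral gap along the solution, which is a substantially harder task than the instantaneous inequality you proposed.
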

\begin{proof}
See \cite[Corrolary 4.5 and Section 9]{Chow1995}.
\end{proof}

\section{Asymptotic dynamics of semilinear heat equations with periodic boundary condition}

In this section, we study the longtime behavior of \eqref{equation-1} with periodic boundary condition \eqref{periodic-bc},
and prove the { Constancy Property Lemma} and Theorems \ref{center-constant}-\ref{structure-thm}

Assume that $X$ is the fractional power space as defined in the introduction. Given any $u\in X$ and $a\in S^1$, recall that the shift $\sigma_a$ on $u$ as $(\sigma_a u)(\cdot)=u(\cdot+a)$.
So, if $\varphi(t,\cdot;u,g)$ is a classical solution of \eqref{equation-lim1}, then it is easy to check that $\sigma_a\varphi(t,\cdot;u,g)$ is a classical solution of \eqref{equation-lim1}. Moreover, the uniqueness of solution ensures the {\it translation invariance}, that is, $\sigma_a\varphi(t,\cdot;u,g)=\varphi(t,\cdot;\sigma_au,g)$.

{A point $u\in X$ is called {\it spatially-homogeneous} if $u(\cdot)$ is independent of the spatial variable $x$. Otherwise, $u$ is called {\it spatially-inhomogeneous}. A subset $A\subset X$ is called {\it spatially-homogeneous} (resp. {\it spatially-inhomogeneous}) if any point in $A$ is spatially-homogeneous (resp. spatially-inhomogeneous).
Obviously, any minimal set $M$ of \eqref{equation-lim2} is either spatially-inhomogeneous or spatially-homogeneous.}

\subsection{Zero number constancy on the minimal set}

In this subsection, we prove the { Lemma \ref{center-constant}}. To do so, we first prove a lemma.

\begin{lemma}\label{asy-translate}
   Let $M$ be a spatially-inhomogeneous minimal set of \eqref{equation-lim2}. Then, for any $g\in H(f)$ and $(u_1,g), (u_2,g)\in M\cap p^{-1}(g)$, there is a sequence $t_n\to\infty$ (resp. $t_n\to-\infty$)such that $g\cdot t_n\to g^+$ (resp. $g\cdot t_n\to g^-$) and
\begin{equation}\label{asym-rota}
\varphi(t_n,\cdot;u_1,g)\to u_1^+,\,\,\, \varphi(t_n,\cdot;u_2,g)\to u_2^+\,\, (resp.\, \varphi(t_n,\cdot;u_1,g)\to u_1^-,\,\,\, \varphi(t_n,\cdot;u_2,g)\to u_2^- )
\end{equation}
with $u_1^+\in \Sigma u_2^+$ (resp. $u_1^-\in \Sigma u_2^-$).
\end{lemma}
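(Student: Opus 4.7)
The plan is to work on the fibered product
\[
\hat M := \{(u, v, g)\in X\times X\times H(f) : (u, g), (v, g) \in M\},
\]
which is closed and invariant under the diagonal skew-product semiflow
\[
\tilde\Pi_t(u, v, g) = \bigl(\varphi(t, \cdot; u, g),\, \varphi(t, \cdot; v, g),\, g \cdot t\bigr),
\]
and to combine the minimality of $M$ with the $S^1$-equivariance $\sigma_a \varphi(t, \cdot; w, g) = \varphi(t, \cdot; \sigma_a w, g)$ of the parabolic semiflow on $S^1$. First one forms the $\omega$-limit set $\Omega^+$ of $(u_1, u_2, g)$ under $\tilde\Pi_t$ (a nonempty compact invariant subset of $\hat M$), picks a minimal subset $N^+ \subset \Omega^+$ via Zorn's lemma, and notes that by the minimality of $M$ both natural projections $N^+ \to M$ are surjective. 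Any point of $N^+$ then arises as a $\tilde\Pi_t$-limit of $(u_1, u_2, g)$ along some $t_n\to\infty$, with the corresponding $g\cdot t_n$ converging to its base-coordinate.

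The lemma will follow once one shows that $N^+$ meets the ``$\Sigma$-diagonal''
\[
\Delta^\Sigma := \{(v_1, v_2, g_0) \in \hat M : v_1 \in \Sigma v_2\},
\]
since any such intersection point supplies the required limit $(u_1^+, u_2^+, g^+)$ with $u_1^+\in\Sigma u_2^+$. To establish this, argue by contradiction. If $N^+ \cap \Delta^\Sigma = \emptyset$, then for every $(v_1, v_2, g_0) \in N^+$ and every $a \in S^1$ the function $v_1 - \sigma_a v_2$ is nontrivial, so $z(v_1 - \sigma_a v_2) < \infty$ by Lemma~\ref{difference-lapnumber}. Using $\sigma_a \varphi(t,\cdot; v_2, g_0) = \varphi(t,\cdot; \sigma_a v_2, g_0)$, the map $t\mapsto z(\varphi(t;v_1,g_0) - \varphi(t;\sigma_a v_2, g_0))$ is non-increasing; coupling with the minimality of $N^+$ and Lemma~\ref{sequence-limit} forces $z(v_1 - \sigma_a v_2)$ to be constant on $N^+$ for each fixed $a$. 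Call this constant $k(a)$.

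Now exploit the genuine $S^1$-parameter $a$, non-degenerate because $M$ is spatially-inhomogeneous (so $\Sigma v_2$ is an honest circle). By Lemma~\ref{zero-cons-local}, $k(a)$ is locally constant on the open subset of $S^1$ where $v_1 - \sigma_a v_2$ has only simple zeros. Combining local constancy in $a$ with the surjectivity of the two projections from $N^+$ onto $M$ and the recurrence inherent to minimal orbits, one transports zero-number data between distinct orbit segments in $N^+$ to produce, at some parameter value $a^*$ and some time instant on a minimal orbit of $N^+$, a multiple-zero configuration; by Lemma~\ref{difference-lapnumber}(b) this forces a strict drop of $z(v_1 - \sigma_{a^*} v_2)$ along the minimal orbit, contradicting the constancy of $k(a^*)$.

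The hard part will be making this last step precise: spatial inhomogeneity together with the $S^1$-parameter alone does not mechanically produce a multiple zero, and one must combine the recurrence on $N^+$ with a careful deformation in $a$ to force the required collision of zeros, while avoiding any circular use of the Constancy Property Lemma~\ref{center-constant} (which is in fact deduced from the present lemma). The backward case $t_n \to -\infty$ is entirely parallel: one replaces the $\omega$-limit by the $\alpha$-limit, available because $M$ is a compact invariant set admitting a flow extension (Subsection~\ref{ScDS}), and repeats the same argument to produce $(u_1^-, u_2^-, g^-)$ with $u_1^-\in\Sigma u_2^-$.
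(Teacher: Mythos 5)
Your setup (pass to a limit along the orbit, note that off the ``$\Sigma$-diagonal'' the zero number $z(v_1-\sigma_a v_2)$ is constant in $t$ by Lemma~\ref{sequence-limit}, and seek a multiple zero to contradict this) is the same strategy the paper uses, and the fibered-product formalism is harmless. But the proof has a genuine gap exactly where you flag it: you never actually produce the multiple zero, and you concede that ``spatial inhomogeneity together with the $S^1$-parameter alone does not mechanically produce'' one. That last step is not a technicality to be smoothed over later --- it is the entire content of the lemma, and no amount of ``transporting zero-number data between orbit segments'' will generate a tangency by itself.

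The missing idea in the paper is an extremal argument on the maximum value. Set $m_i(t)=\max_{x}\varphi(t,x;u_i,g)$. If $z(\varphi(t,\cdot;u_1,g)-\varphi(t,\cdot;\sigma_a u_2,g))$ is constant in $t$ (for $t\ge T_0$) for every $a\in\mathcal{S}$, then each such difference has only simple zeros; but if $m_1(t)=m_2(t)$ at some time, shifting $u_2$ by the $a$ that aligns the two maximum points produces a common zero where the $x$-derivative also vanishes, i.e.\ a multiple zero. Hence $m_1(t)\neq m_2(t)$ for all $t\ge T_0$, say $m_1(t)>m_2(t)$. Now let $m^+=\max\{\max_x u(x):(u,g^+)\in M\}$ and, using minimality of $M$, choose $t_n^+\to\infty$ so that $\varphi(t_n^+,\cdot;u_2,g)$ converges to a point $u_2^{++}$ of $M\cap p^{-1}(g^+)$ \emph{attaining} $m^+$. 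The corresponding limit $u_1^{++}$ of $\varphi(t_n^+,\cdot;u_1,g)$ then satisfies $\max u_1^{++}\ge \max u_2^{++}=m^+$ and $\max u_1^{++}\le m^+$ (since $(u_1^{++},g^+)\in M$), so the maxima coincide --- and the same alignment argument forces a multiple zero unless $u_1^{++}\in\Sigma u_2^{++}$. Without this (or an equivalent) selection of the limiting sequence by an extremal quantity, your contradiction never materializes, so the proof as proposed is incomplete. (A further caution: your plan leans on the surjectivity of the projections $N^+\to M$ and on recurrence, neither of which plays any role in the paper's argument; the correct sequence $t_n^+$ is chosen by the extremal property of $u_2^{++}$, not by minimality of $N^+$.)
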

\begin{proof}
Since $M$ is a spatially-inhomogeneous minimal set, it is easy to see that all the elements in $M$ share the same spatial-minimal period, that is, there exists $L^0>0$ such that for any $(u,g)\in M$, one has $u(\cdot+L^0)=u(\cdot)$ and $u(\cdot+a)\neq u(\cdot)$ for $a\in (0,L^0)$. Hereafter, we always assume $\mathcal{S}=\mathbb{R}/L^0\mathbb{Z}$.

In fact,  by taking a sequence $t_n\to \infty$, we assume that $g\cdot t_n\to g^+$ and
$\varphi(t_n,\cdot;u_1,g)\to u_1^+,\,\,\, \varphi(t_n,\cdot;u_2,g)\to u_2^+.$
If $u_1^+\in\Sigma u_2^+$, then the lemma holds.
If $u_1^+\not\in \Sigma u_2^+$, then by Lemma \ref{sequence-limit} and the connectivity of $\mathcal{S}$,
 there is an integer $\tilde N$ such that
$$
z(\varphi(t,\cdot;u_1^+,g^+)-\varphi(t,\cdot;\sigma_a u_2^+,g^+))=\tilde N,
\quad \text{for all }t\in \RR \text{ and }a\in \mathcal{S}.
$$
{By Lemma \ref{difference-lapnumber}, Lemma \ref{zero-cons-local} and} the compactness of $\mathcal{S}$, one can find a $T_0>0$ such that
$$
z(\varphi(t,\cdot;u_1,g)-\varphi(t,\cdot;\sigma_a u_2,g))=\tilde N,
\quad \text{for all }t\ge T_0 \text{ and }a\in \mathcal{S}.
$$
As a consequence,
$$
m_1(t):=\max _{x\in \mathcal{S}} \varphi(t,x;u_1,g)\not = m_2(t):= \max_{x\in \mathcal{S}}\varphi(t,x;u_2,g),\quad \text{for all }\,t\ge T_0.
$$
Without loss of generality, we may assume that
$m_1(t)>m_2(t)$ for all $t\ge T_0$. For the above $g^+$, let $m^+=\max\{\max_{x\in \mathcal{S}} u(x):(u,g^+)\in M\}$ and choose $u_2^{++}$ be such that
$(u_2^{++},g^+)\in M$ with $\max_{x\in \mathcal{S}}u_2^{++}(x)=m^+$. Since $M$ is minimal, one can take another sequence $t_n^+\to\infty$ such that
$$
(\varphi(t_n^+,\cdot;u_2,g),g\cdot t_n^+)\to (u_2^{++}(\cdot), g^+).
$$
For simplicity, we assume that $\varphi(t_n^+,\cdot;u_1,g)\to u_1^{++}(\cdot).$
Then, by the definition of $m^+$, we must have
\begin{equation}\label{max-equal}
\max_{x\in \mathcal{S}}u_1^{++}(x)=\max_{x\in \mathcal{S}}u_2^{++}(x).
\end{equation}
Suppose that $u_1^{++}\not \in  \Sigma_a u_2^{++}$. Then, again by Lemma \ref{sequence-limit} and the connectivity of $\mathcal{S}$,
there is $\tilde N^+$ such that
$$
z(\varphi(t,\cdot;u_1^{++},g^+)-\varphi(t,\cdot;\sigma_a u_2^{++},g^+))=\tilde N^+,\quad \forall\,\, t\in\RR,\,\, a\in \mathcal{S}.
$$
This contradicts to \eqref{max-equal}. Hence,
$u_1^{++}\in\Sigma u_2^{++}$
and the claim is proved. Similarly, there is $s_n\to -\infty$ such that $g\cdot s_n\to g^-$ and
\begin{equation}\label{asym-negative}
\varphi(s_n,\cdot;u_1,g)\to u_1^-,\,\,\, \varphi(s_n,\cdot;u_2,g)\to u_2^-,
\end{equation}
with $u_1^-\in \Sigma u_2^-$.
\end{proof}

We now prove Lemma \ref{center-constant}.

\begin{proof}[Proof of Lemma\ref{center-constant}]
Noticing that if $M$ is a spatially-homogeneous minimal set, then for any $g\in H(f)$, and two distinct points $(u_1,g),(u_2,g)\in p^{-1}(g)\cap M$, one has
\begin{equation*}
 z(\varphi(t,\cdot;u_1,g)-\varphi(t,\cdot;\sigma_au_2,g))=0,\quad \text{for any }\ t\in\mathbb{R},\ a\in S^1.
\end{equation*}
Therefore, we only consider the case that $M$ is spatially-inhomogeneous.

We prove that there exists $N\in \mathbb{N}$ such that for all $(u,g)\in M$, one has
\begin{equation}\label{contstant-S}
  z(\varphi(t,\cdot;u,g)-\varphi(t,\cdot;\sigma_au,g))=N, \quad \forall a\in \mathcal{S}\setminus\{0\}
\end{equation}
{where $\mathcal{S}$ be as defined in the proving of Lemma \ref{asy-translate}.}

Fix $(u_1,g_1)\in M$ and $a\in \mathcal{S}\setminus\{0\}$. Since $M$ is minimal, there exists $t_n\to \infty$ such that $\Pi_{t_n}(u_1,g_1)\to (u_1,g_1)$. By Lemma \ref{sequence-limit}, one has
\begin{equation}
  z(\varphi(t,\cdot;u_1,g_1)-\varphi(t,\cdot;\sigma_{a}u_1,g_1))=N_1
\end{equation}
for some $N_1$. Moreover, it is not hard to see that $N_1$ is independent of  the choice of $a\in \mathcal{S}\setminus\{0\}$. Similarly, for another given point $(u_2,g_2)\in M$, there is $N_2\in \mathbb{N}$ such that
\begin{equation}
  z(\varphi(t,\cdot;u_2,g_2)-\varphi(t,\cdot;\sigma_{a}u_2,g_2))=N_2, \quad \forall a\in \mathcal{S}\setminus\{0\}.
\end{equation}
 Again by the minimality of $M$, there exists $s_n\to\infty$ such that $\Pi_{s_n}(u_1,g_1)\to (u_2,g_2)$, by Lemma \ref{zero-cons-local}, it is also easy to see that $N_1=N_2$. For simplicity, we set $N=N_1=N_2$, \eqref{contstant-S} is then proved.

 In view of \eqref{contstant-S}, it is easy to see that \eqref{const-mini} always established for the given $N$ provided that $(u_1,g)$, $(u_2,g)\in M$ with $u_1\in \Sigma u_2$. Therefore, in the left of the proof, we always assume that $u_1\notin \Sigma u_2$ and will prove the following
\begin{equation}\label{contstant-S1}
  z(\varphi(t,\cdot;u_1,g)-\varphi(t,\cdot;\sigma_au_2,g))=N, \quad \forall t\in\mathbb{R},\, a\in \mathcal{S}.
\end{equation}

First of all, we show that
\begin{equation}\label{geq-S}
  z(\varphi(t,\cdot;u_1,g)-\varphi(t,\cdot;\sigma_au_2,g))\geq N,\quad \forall t\in\mathbb{R},\, a\in \mathcal{S}.
\end{equation}
By Lemma \ref{asy-translate}, one may assume that $\Pi_{t_n}(u_1,g)\to (u^+,g^+)$ and $\Pi_{t_n}(u_2,g)\to (\sigma_{a^+}u^+,g^+)$ for some $t_n\to\infty$, $(u^+,g^+)\in M$ and $a^+\in \mathcal{S}$. For any $a\in \mathcal{S}$ with $\sigma_{a+a^+}u^+\neq u^+$, by Lemma \ref{difference-lapnumber}(c), {Lemma \ref{zero-cons-local}} and equation \eqref{contstant-S}, there exists $T_a>0$ such that
\begin{equation}
  z(\varphi(t,\cdot;u_1,g)-\varphi(t,\cdot;\sigma_au_2,g))=z(u^+-\sigma_{a+a^+}u^+)=N, \quad t\geq T_a.
\end{equation}
Thus,
\begin{equation}\label{geq-inequality}
  z(\varphi(t,\cdot;u_1,g)-\varphi(t,\cdot;\sigma_au_2,g))\geq N, \quad \forall t\in\mathbb{R},\, \sigma_{a+a^+}u^+\neq u^+.
\end{equation}

For $a\in \mathcal{S}$ with $\sigma_{a+a^+}u^+= u^+$, again by Lemma \ref{difference-lapnumber}(c), there exist $T^+_{a}>0$ and $N_{a}^+\in \mathbb{N}\cup\{0\}$, such that
\begin{equation}\label{constant-1}
  z(\varphi(t,\cdot;u_1,g)-\varphi(t,\cdot;\sigma_{a}u_2,g))=N^+_{a}, \quad t\geq T^+_{a}.
\end{equation}
Particularly, we have
\begin{equation}
  z(\varphi(T^+_{a},\cdot;u_1,g)-\varphi(T^+_{a},\cdot;\sigma_{a}u_2,g))=N^+_{a}.
\end{equation}
Moreover, {by view of Lemma \ref{zero-cons-local}}, there is $\delta>0$ such that for all $|a-a_0|<\delta$, one has
\begin{equation}
  z(\varphi(T^+_{a},\cdot;u_1,g)-\varphi(T^+_{a},\cdot;\sigma_{a_0}u_2,g))=N^+_{a}.
\end{equation}
Since $M$ is spatially-inhomogeneous, there is $a^*$ with $|a^*-a|<\delta$ such that $\sigma_{a^*+a^+}u^+\neq u^+$ and
\begin{equation}\label{continuity}
  z(\varphi(T^+_{a},\cdot;u_1,g)-\varphi(T^+_{a},\cdot;\sigma_{a^*}u_2,g))=N^+_{a}.
\end{equation}
This together with \eqref{geq-inequality} implies $N_{a}^+\geq N$,  \eqref{geq-S} is thus proved.

To complete our proof, we still need to show
\begin{equation}\label{leq-S}
  z(\varphi(t,\cdot;u_1,g)-\varphi(t,\cdot;\sigma_au_2,g))\leq N, \quad \forall t\in\mathbb{R}, a\in \mathcal{S}.
\end{equation}
Following from Lemma \ref{asy-translate},  we assume that $\Pi_{s_n}(u_1,g)\to (u^-,g^-)$ and $\Pi_{s_n}(u_2,g)\to (\sigma_{a^-}u^-,g^-)$ for some $(u^-,g^-)\in M$ and $a^-\in \mathcal{S}$, while $s_n\to-\infty$. For $a\in \mathcal{S}$ with $\sigma_{a+a^-}u^-\neq u^-$, again by Lemma \ref{difference-lapnumber}(c), {Lemma \ref{zero-cons-local}} and  \eqref{contstant-S}, there exists $T_a>0$ such that
\begin{equation}
  z(\varphi(t,\cdot;u_1,g)-\varphi(t,\cdot;\sigma_au_2,g))=z(u^--\sigma_{a+a^-}u^-)=N, \quad t\leq -T_a.
\end{equation}
Thus, if $a\in \mathcal{S}$ satisfies  $\sigma_{a+a^-}u^-\neq u^-$, then

\begin{equation}
  z(\varphi(t,\cdot;u_1,g)-\varphi(t,\cdot;\sigma_au_2,g))\leq N, \quad \forall t\in\mathbb{R}.
\end{equation}

Suppose $a\in \mathcal{S}$ is that $\sigma_{a+a^-}u^-=u^-$, noticing that
$$0<\|\varphi(t,\cdot;u_1,g)-\varphi(t,\cdot;\sigma_au_2,g)\|<C,\ t\in \mathbb{R}$$
for some $C>0$ (since $X$ is compactly embed into $L^2(S^1)$, this means $\|\varphi(t,\cdot;u_1,g)-\varphi(t,\cdot;\sigma_au_2,g)\|_{L^2(S^1)}$ is exponential bounded). Thus, by Lemma \ref{zero-up-control}, there is an integer $N'_a\in\mathbb{N}\cup \{0\}$ such that
$$
z(\varphi(t,\cdot;u_1,g)-\varphi(t,\cdot;\sigma_au_2,g))\leq N'_a
$$
for all $t\in\mathbb{R}$. By Lemma \ref{difference-lapnumber}(c),
there exists $N^-_a\in\mathbb{N}$ and $T^-_a>0$ such that
\begin{equation}\label{negative-constant}
  z(\varphi(t,\cdot;u_1,g)-\varphi(t,\cdot;\sigma_au_2,g))=N^-_a, \quad \forall t\leq -T^-_a.
\end{equation}

Based on \eqref{negative-constant}, one can use similar arguments between \eqref{constant-1}-\eqref{continuity} to get
\eqref{leq-S}. Combining with \eqref{geq-S},
\begin{equation*}
  z(\varphi(t,\cdot;u_1,g)-\varphi(t,\cdot;\sigma_au_2,g))=N,\quad\forall t\in\mathbb{R},
\end{equation*}
for any $(u_1,g),(u_2,g)$ with $u_1\neq \sigma_a u_2$.

We have completed the proof of Lemma \ref{center-constant}.
\end{proof}

\subsection{Almost automorphically forced circle flows and almost periodically forced flow on $\mathbb{R}$}

In this subsection, we discuss the structure of compact minimal invariant set of \eqref{equation-lim2}, and prove
Theorems \ref{a-a-circle-flow} and \ref{a-p-flow}.

\begin{proof}[Proof of Theorem \ref{a-a-circle-flow}] We note that in the case that the dimension of the center space of $M$ is $2$ and the dimension of the unstable space of $M$ is odd, Theorem \ref{a-a-circle-flow} becomes \cite[Theorem 3.1 (1)]{SWZ}. In the general case, Theorem \ref{a-a-circle-flow} can
be proved by the same arguments as those in \cite[Theorem 3.1 (1)]{SWZ}.  For the clarity, we provide the outline of the proof in the following.
\smallskip

(1) As it is mentioned in the proof of Lemma  \ref{asy-translate},  it is easy to see that all the elements in $M$ share the same spatial-minimal period, that is, there exists $L^0>0$ such that for any $(u,g)\in M$, one has $u(\cdot+L^0)=u(\cdot)$ and $u(\cdot+a)\neq u(\cdot)$ for $a\in (0,L^0)$. In the following,  $\mathcal{S}=\mathbb{R}/L^0\mathbb{Z}$.

\smallskip

(2)
First, we introduce the quotient space $\tilde X$ of $X$ and the induced mapping of $\Pi_{t}$ on the quotient $\tilde X$  as follows.

For any $u,v\in X$, $u \sim v$ if and only if $u=\sigma_a v$ for some $a \in S^1$. It is easy to check that ``$\sim$'' is an equivalence relation on $X$,
denoted by $[u]$ for the equivalence class. Let $\tilde{X}=\{[u]|u\in X\}$, then $\tilde {X}$ is a quotient space of $X$. And hence, $\tilde{X}$ is a metric space with metric $\tilde{d}_{\widetilde{X}}$ defined as $\tilde{d}_{\tilde{X}}([u],[v]):=d_H(\Sigma u,\Sigma v)$ for any $[u],[v]\in \tilde{X}$
(Here $d_H(U,V)$ is the Hausdorff metric of the compact subsets $U,V$ in $X$, defined as $d_{H}(U,V)=\sup\{\sup_{u \in U} \inf_{v \in V} d_X(u,v),$ $\, \sup_{v \in V} \inf_{u \in U} d_X(u,v)\}$ and the metric $d_X(u,v)=||u-v||_{X}$). Note that $d_X$ satisfies the $S^1$-translation invariance:  $d_X(\sigma_a u,\sigma_a v)=d_X(u,v)$ for any $u,v\in X$ and any $a \in S^1$.
For any subset $K\subset X\times H(f)$, we write $\tilde{K}=\{([u],g)\in \tilde{X}\times H(f)|(u,g)\in K\}$.
Define the induced mapping $\tilde \Pi_{t}$ of  ${\Pi}_t$ ($t\geq 0$) on $\tilde{X}\times H(f)$ by
\begin{align}\label{induced-skepro-semiflow}
\begin{split}
\tilde{\Pi}_t:\tilde{X}\times H(f)&\rightarrow \tilde{X}\times H(f);\\
([u],g)&\rightarrow (\tilde{\varphi}(t,\cdot;[u],g),g\cdot t):=([\varphi(t,\cdot;u,g)],g\cdot t).
\end{split}
\end{align}

Second, note that the proof of Theorem \cite[Theorem 3.1 (1)]{SWZ} applies to any minimal set of $\Pi_t$ satisfies \cite[Corollary 3.9, Lemmas 3.10-3.12]{SWZ}. Hence Theorem  \ref{a-a-circle-flow} follows from the arguments of \cite[Theorem 3.1 (1)]{SWZ} provided that $M$ satisfies \cite[Corollary 3.9, Lemmas 3.10-3.12]{SWZ}.

Third, for $u\in X$, let $m(u)=\sup_{x\in S^1}u(x)$.  By  the constancy of the zero number on  the minimal set $M$ (see { Lemma} \ref{center-constant}),  $M$ satisfies \cite[Corollary 3.9]{SWZ}. That is, there is $N\ge 0$ such that for any $g\in H(f)$ and any two elements $(u_1,g),(u_2,g)$ in $M\cap p^{-1}(g)$,
{\it  \begin{itemize}
  \item[\rm{(i)}] $ z(\varphi(t,\cdot;u_1,g)-\varphi(t,\cdot;\sigma_au_2,g))=N$ for all $t\in\mathbb{R}$ and $a\in S^1$ with { $u_1\neq \sigma_au_2$};
  \item[\rm{(ii)}]$m(u_1)<m(u_2)$, then
$m(\varphi(t,\cdot;u_1,g))<m(\varphi(t,\cdot; u_2,g)),\  for\ all\ t\in \mathbb{R}$;
\item[\rm{(iii)}]
$m(u_1)=m(u_2)$ $\Leftrightarrow$ $([u_1],g)=([u_2],g)$.
  \end{itemize}
}

Fourth, it can be directly verified that \cite[Lemma 3.10]{SWZ} holds. That is, we have
{\it \begin{itemize}
\item [\rm{(iv)}] $\tilde{\Pi}_t$ admits a skew-product semiflow on $\tilde{X}\times H(f)$;
\item [\rm{(v)}] If $M$ is a minimal subset in $X\times H(f)$ w.r.t. $\Pi_{t}$, then $\tilde{M}$ is also a minimal subset in $\tilde{X}\times H(f)$ w.r.t. $\tilde{\Pi}_t$.
\end{itemize}
}

Fifth,  let $\tilde{p}:\tilde{X}\times H(f)\rightarrow H(f)$ be the natural projection. Define an ordering on each fiber $\tilde{M}\cap \tilde{p}^{-1}(g)$, with the base point $g\in H(f)$ as follows:
$$([u],g)\leq_g ([v],g)\,\, \text{ if }\,\, m(u)\leq m(v).$$
We also write the strict relation $([u],g)<_g ([v],g)$ if $m(u)<m(v)$. Without any confusion, we hereafter will drop the subscript $``g"$.
By  the constancy of the zero number on  the minimal set $M$ (see { Lemma} \ref{center-constant}) again, it can be proved that $M$ satisfied \cite[Lemma 3.11]{SWZ}. That is,
{\it \begin{itemize}
\item[(vi)]
$``\leq"$ is a total ordering on each $\tilde{M}\cap \tilde{p}^{-1}(g)$, ($g\in H(f)$) and $\tilde{\Pi}_t$ is strictly order preserving on $\tilde{M}$ in the sense that, for any $g\in H(f)$, $([u],g)<([v],g)$ implies that $\tilde{\Pi}_t([u],g)<\tilde{\Pi}_t([v],g)$ for all $t\geq 0$.
\end{itemize}
}

Sixth,  let $E\subset \tilde{X}\times H(f)$ be a compact invariant subset of $\tilde{\Pi}_t$ which admits a flow extension.
For each $g\in H(f)$, we define {\it a fiberwise strong ordering $``\ll"$ on each fiber} $E\cap\tilde{p}^{-1}(g)$ as follows: $([u_1],g)\ll ([u_2],g)$ if there exist neighborhoods $\mathcal{N}_1,\mathcal{N}_2 \subset  E\cap\tilde{p}^{-1}(g)$ of $([u_1],g),([u_2],g)$, respectively, such that $([u^*_1],g)< ([u^*_2],g)$ for all $([u_i^{*}],g)\in \mathcal{N}_i\ (i=1,2).$
 Moreover, for each $g\in H(f)$, we say $([u_1],g),([u_2],g)$ forms {\it a strongly order-preserving pair} if $([u_1],g),([u_2],g)$ is strongly ordered on the fiber, written $([u_1],g)\ll([u_2],g)$, and there are neighborhoods $U_i$ of $([u_i],g)$ $(i=1,2)$ in $E$ respectively, such that whenever $([u^*_1],g),([u^*_2],g)\in E\cap \tilde{p}^{-1}(g)$, with $\tilde{\Pi}_{T}([u^*_1],g)\in U_i\ (i=1,2)$ for some $T<0$, then $([u^*_1],g)\ll([u^*_2],g)$. We have \cite[Lemma 3.12]{SWZ} is also satisfied. That is,
{\it \begin{itemize}
\item[(vii)]Let $\tilde M$ be a minimal set of $\tilde{\Pi}_t$ which admits a flow extension and $Y'$ be as in Lemma \ref{epimorphism-thm}. Then for any $g\in Y',\, \tilde M\cap\tilde{p}^{-1}(g)$ admits no strongly order preserving pair.
\end{itemize}
}


Let $Y_0=Y'$, where $Y'$ is as defined in Lemma \ref{epimorphism-thm}. By virtue of (iv)-(v), we consider the induced minimal set $\tilde{M}$ for the skew-product semiflow $\tilde{\Pi}_t$ on $\tilde{X}\times H(f)$.

We show that for any $g\in Y_0$, there exists $u_g\in X$ such that $M\cap p^{-1}(g)\subset (\Sigma u_g,g)$, which equivalents to prove that $\tilde{M}\cap \tilde{p}^{-1}(g)$ is a singleton. Suppose on the contrary that there are two distinct points $([u_1],g),([u_2],g)$ on $\tilde{M}\cap \tilde{p}^{-1}(g)$ for some $g\in Y_0$. Then by (vi) and the same argument as those in \cite[Theorem 3.1 (1)]{SWZ}, one can get $([u_1],g)$ and $([u_2],g)$ forms a strongly order preserving pair.

On the other hand, (vii) implies that there exists no such strongly order preserving pair on $\tilde{M}\cap \tilde{p}^{-1}(g)$, {which forms} a contradiction.  Thus, for any $g\in Y_0$, $\tilde{M}\cap \tilde{p}^{-1}(g)$ is a singleton (In other words, $\tilde{M}$ is an almost $1$-cover of $H(f)$), which also implies that $M\cap p^{-1}(g)\subset (\Sigma u_g,g)$ for some $u_g\in X$. (2) is thus proved.

\smallskip

(3)  
Define the mapping
\begin{equation}\label{E:natu-proj-h}
h:\tilde{M}\to \mathbb{R}\times H(f); ([u],g)\mapsto (m(u),g).
\end{equation}
Let $\hat{M}=h(\tilde{M})$, one can naturally define the skew-product flow on $\hat{M}\subset \mathbb{R}\times H(f)$:
\begin{equation}\label{E:induced-flow-hat-M}
\hat{\Pi}_t:\hat{M}\to \hat{M};(m(u),g)\mapsto (m(\varphi(t,\cdot,u,g)),g\cdot t),
\end{equation}
which is induced by $\Pi_{t}$ restricted to $M$. By (iii) and the arguments as that in \cite[Theorem 3.1 (1)]{SWZ}, $h$ is a topologically-conjugate homeomorphism between $\tilde{M}$ and $\hat{M}\subset \mathbb{R}\times H(f)$. As a consequence, $\hat{M}$ is also an almost $1$-cover of $H(f)$ (with the residual subset $Y_0\subset H(f)$).

For each $g\in Y_0$, we choose some element, still denoted by $u_g(\cdot)$, from the $S^1$-group orbit $\Sigma u_g$ such that
\begin{equation}\label{E:u-g--base-trn}
\quad u_g(0)=m(u_g), \quad\hat M\cap p^{-1}(g)=(m(u_g),g)\,\text{ and }\,\tilde{M}\cap \tilde{p}^{-1}(g)=([u_g],g).
\end{equation}
Together with  \eqref{E:induced-flow-hat-M}, \eqref{E:u-g--base-trn} implies that $$u_{g\cdot t}(0)=m(u_{g\cdot t})=m(\varphi(t,\cdot,u_g,g)),\,\,\,\text{ for any }g\in Y_0\,\,\text{ and }t\in \mathbb{R}.$$
The above function
$t\mapsto u_{g\cdot t}(0)$ is clearly continuous and is almost automorphic in $t$, and $u^{ g}(t,x):=u_{g\cdot t}(x)$ is almost automorphic in $t$ uniformly in $x$. Moreover,
\begin{equation}\label{E:u'-u''-nontrivial-1}
u^{'}_g(0)=0\,\text{ and }u^{''}_g(0)\not =0\quad \text{ for any }g\in Y_0
\end{equation}
due to the minimality and spatial-inhomogeneity of $M$.

For given $g\in Y_0$, let
$$
G(t;g)=g_p(t,u_{g\cdot t}(0),0)+\frac{u^{'''}_{g\cdot t}(0)}{u^{''}_{g\cdot t}(0))}.
$$
It is clear that $G(t;g)$ is almost automorphic.
Let
$$
M_0=\{(u,g)\in M\,|\, g\in Y_0\}
$$
and
$$
\bar M_0=\{(c(u),G(\cdot;g))\,|\, (u,g)\in M_0,\,\, c(u)\in \mathcal{S} \,\, \text{is such that}\,\, u(\cdot)=u_g(\cdot+c(u))\},
$$
It is also clear that
$$\bar h_0:M_0\to \bar M_0,\,\, \bar h_0(u,g)=(c(u),G(\cdot;g))
$$
is a homeomorphism
\smallskip

(4) For any $g\in Y_0$, we can define a function $t\mapsto c^g(t)\in \mathcal{S}$ such that
\begin{equation}\label{E:rotation-spiral}
 \varphi(t,x;u_g,g)= u_{g\cdot t}(x+ c^g(t)),\quad \text{or equivalently,}\quad \varphi(t,x-c^g(t);u_g,g)=u_{g\cdot t}(x).
\end{equation}
Similarly as the arguments in \cite[Theorem 3.1 (1)]{SWZ}, the function $t\mapsto c^g(t)\in \mathcal{S}$ is continuous.

By \eqref{E:rotation-spiral} and the property of $u_{g\cdot t}(x)$ in \eqref{E:u'-u''-nontrivial-1}, we observe that
$$
\varphi_x(t,-c^g(t);u_g,g)=u^{'}_{g\cdot t}(0)=0\,\,
\text{ and }\,\,\varphi_{xx}(t,-c^g(t);u_g,g)= u^{''}_{g\cdot t}(0)\not =0.
$$
Then by the continuity of $c^g(t)$ in $t$ and Implicit Function Theorem, we have $c^g(t)$ is differentiable in $t$; and moreover
\begin{eqnarray}\label{circle-flow-eq41}
\dot {c}^g(t)=G(t;g)
 \end{eqnarray}
and
 hence $\dot{c}^g(t)$, is time almost-automorphic in $t$.   Let
 $$
{\bar \Pi_t^0: \bar M_0\to\bar M_0,\quad \bar\Pi_t^0 (c,G)=(c+c^g(t),G\cdot t)}
$$
It is not difficult to see that $\bar \Pi_t$ is a flow on $\bar M_0$, and
$$
\bar h_0\Pi_t (u,g)=\bar\Pi_t^0 \bar h_0(u,g)\quad \forall \, { t\in\mathbb{R}},\, \, (u,g)\in M_0.
$$
Since $\bar h_0$ is a homeomorphism, to prove it is a topological-conjugate between two flows we only need to check the above equality is correct, indeed
\begin{equation*}
\begin{split}
\bar h_0\Pi_t (u,g)&=(c(\varphi(t,\cdot;u_g(\cdot+c(u)),g)),G\cdot t) \\
&=(c(\varphi(t,x+c(u);u_g,g)),G\cdot t)\\
&=(c(u_{g\cdot t}(x+c(u)+c^g(t))),G\cdot t)\\
&=(c(u)+c^g(t),G\cdot t)=\bar\Pi_t^0 \bar h_0(u,g).
\end{split}
\end{equation*}
This completes the proof of (4).
\end{proof}

To prove Theorem \ref{a-p-flow}, we need the following important lemma from \cite[Proposition 3.2]{SWZ2}.

\begin{lemma}\label{order}
Assume that $f(t,u,u_x)=f(t,u,-u_x)$ in \eqref{equation-1}. Let $(u_0,g_0)\in X\times H(f)$ be such that the motion $\Pi_{t}(u_0,g_0)$($t>0$) of \eqref{equation-lim2} {is bounded} and $\omega(u_0,g_0)$ be the $\omega$-limit set. Then, there is a point $x_0\in S^1$ such that for any $(u,g)\in \omega(u_0,g_0)$, one has $u_x(x_0)=0$.
\end{lemma}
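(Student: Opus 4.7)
My plan is to exploit the reflection invariance of the equation. The condition $f(t,u,u_x)=f(t,u,-u_x)$ implies that for each $a\in S^1$ the reflection operator $\mathcal{R}_a:X\to X$, $(\mathcal{R}_a u)(x):=u(2a-x)$, commutes with the skew-product semiflow: $\varphi(t,\cdot;\mathcal{R}_a u,g)=\mathcal{R}_a\varphi(t,\cdot;u,g)$ for all $(u,g)$ and $t\ge 0$. In particular, any $u\in X$ satisfying $u=\mathcal{R}_{x_0}u$ (reflection-symmetric about $x_0$) automatically satisfies $u_x(x_0)=0$, since differentiating $u(x_0+y)=u(x_0-y)$ in $y$ at $y=0$ gives $u_x(x_0)=-u_x(x_0)$. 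My goal is therefore to produce a single axis $x_0\in S^1$ about which every $(u,g)\in\omega(u_0,g_0)$ is reflection-symmetric.

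For each $a\in S^1$ I introduce the auxiliary difference
\[
w_a(t,x) := \varphi(t,x;u_0,g_0)-\varphi(t,2a-x;u_0,g_0).
\]
Since $\mathcal{R}_a\varphi(t,\cdot;u_0,g_0)$ also solves the $g_0$-equation, $w_a$ satisfies a linear parabolic equation of the form $\partial_t w=\partial_{xx}w+b(t,x)\partial_x w+c(t,x)w$ on $S^1$, with coefficients bounded thanks to the boundedness of the orbit of $(u_0,g_0)$, and $w_a(t,a)\equiv w_a(t,a+L/2)\equiv 0$ for every $t$. By Lemma \ref{zero-number}, $z(w_a(t,\cdot))$ is finite, non-increasing in $t$, and eventually equal to a constant $N_a$, with $N_a\ge 2$ unless $w_a\equiv 0$ for large $t$. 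For any $(u,g)\in\omega(u_0,g_0)$ reached along a sequence $t_n\to\infty$, one has $w_a(t_n,\cdot)\to u-\mathcal{R}_a u$ in $X$, and Lemma \ref{zero-cons-local} together with this constancy gives $z(u-\mathcal{R}_a u)\le N_a$ whenever $u\ne \mathcal{R}_a u$.

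The decisive step is to pick $x_0\in S^1$ as a minimizer of $a\mapsto N_a$. If this minimum is zero (i.e.\ $w_{x_0}\equiv 0$ for large $t$), one passes to the limit along $t_n\to\infty$ to conclude $u=\mathcal{R}_{x_0}u$ for every $(u,g)\in\omega(u_0,g_0)$, and the claim is proved. The delicate case is $N_a\ge 2$ for all $a$: here I argue by contradiction, invoking constancy of the zero number on the omega-limit set (Lemma \ref{sequence-limit}, applied to pairs $\Pi_t(u,g)$ and $\Pi_t(\mathcal{R}_a u,g)$ inside the compact invariant set $\omega(u_0,g_0)\cup \mathcal{R}_a\omega(u_0,g_0)$). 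At a minimizer $a^*$, any additional zero of the limit $u-\mathcal{R}_{a^*}u$ beyond the forced pair $\{a^*,a^*+L/2\}$ must be a multiple zero — otherwise slightly perturbing $a$ and applying Lemma \ref{zero-cons-local} would produce a strictly smaller $N_a$ — which by Lemma \ref{zero-number}(b) forces $z$ to drop strictly in forward time, contradicting the constancy on $\omega(u_0,g_0)$. Hence $w_{a^*}\to 0$, yielding the common axis $x_0=a^*$, and the symmetry transfers instantly to every $(u,g)\in\omega(u_0,g_0)$ via the commutation of $\mathcal{R}_{x_0}$ with the semiflow. The main obstacle is precisely this continuity-plus-constancy argument: one must rule out the persistence of "extra" zeros of $w_{a^*}$ other than the two mandatory ones and then propagate the symmetry uniformly across the possibly complicated omega-limit set, both of which require carefully threading the global constancy along $\omega$ with the local zero-number semicontinuity and the continuous $a$-dependence of $w_a$.
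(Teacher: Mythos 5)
Your setup is sound and uses exactly the right tools: under the evenness assumption the reflection $\mathcal{R}_a$ commutes with the semiflow, $w_a$ solves a linear parabolic equation with the two pinned zeros at $a$ and $a+L/2$, and Lemma \ref{sequence-limit} (applied to the pair $\Pi_{t_n}(u_0,g_0)\to(u,g)$, $\Pi_{t_n}(\mathcal{R}_a u_0,g_0)\to(\mathcal{R}_a u,g)$) shows that for any $(u,g)\in\omega(u_0,g_0)$ with $u\neq\mathcal{R}_a u$ the zero number $z(\varphi(t,\cdot;u,g)-\varphi(t,\cdot;\mathcal{R}_a u,g))$ is constant in $t\in\mathbb{R}$, hence $u-\mathcal{R}_a u$ has only simple zeros. (Note the paper itself gives no proof of Lemma \ref{order}; it quotes it from \cite[Proposition 3.2]{SWZ2}.) But your decisive step contains a genuine gap. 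The claim that at a minimizer $a^*$ ``any additional zero of $u-\mathcal{R}_{a^*}u$ must be multiple, else slightly perturbing $a$ would produce a strictly smaller $N_a$'' is a non sequitur: if the extra zeros are simple, then for $a$ near $a^*$ Lemma \ref{zero-cons-local} preserves them along with the pinned pair, so the nearby zero number is \emph{equal to} (in fact at most) $N_{a^*}$; perturbing the axis never makes simple zeros disappear, so minimality of $N_{a^*}$ yields no contradiction. And once the multiple-zero alternative is removed, what remains is precisely the consistent dichotomy: either $u=\mathcal{R}_{a^*}u$, or $u-\mathcal{R}_{a^*}u$ has exactly $N_{a^*}\geq 2$ simple zeros --- equivalently $u_x(a^*)\neq 0$, since the derivative of $u-\mathcal{R}_{a^*}u$ at its zero $x=a^*$ equals $2u_x(a^*)$. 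Nothing in your argument excludes the second alternative, so the inference ``hence $w_{a^*}\to 0$'' is unsupported.

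What your machinery actually delivers is the local fact that every $(u,g)\in\omega(u_0,g_0)$ is symmetric about each of its \emph{own} critical points (if $u_x(a)=0$ then $u-\mathcal{R}_a u$ has a multiple zero at $a$, which the constancy from Lemma \ref{sequence-limit} forbids unless $u\equiv\mathcal{R}_a u$). The substantive content of Lemma \ref{order} --- which your sketch does not reach --- is that a \emph{single} axis $x_0$ works simultaneously for all elements of the possibly non-minimal set $\omega(u_0,g_0)$: distinct limit points of the same orbit could a priori be symmetric about different (rotated) axes, and $w_a(t_n,\cdot)\to 0$ along one sequence is perfectly compatible with $w_a(s_n,\cdot)\to\psi\neq 0$ along another, since a nontrivial solution of a linear parabolic equation may have $0$ as merely one of several limit points. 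Pinning the axis uniformly across the whole $\omega$-limit set is exactly where the cited proof in \cite[Proposition 3.2]{SWZ2} does its real work, and that step is missing here. A further minor slip: your inequality $z(u-\mathcal{R}_a u)\leq N_a$ ``by Lemma \ref{zero-cons-local}'' misquotes the semicontinuity, which goes the other way (a $C^1$-limit can acquire extra, multiple zeros); it is only the flow-invariance argument of Lemma \ref{sequence-limit} that justifies the claim.
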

\begin{proof}[Proof of Theorem \ref{a-p-flow}]
By { Lemma} \ref{center-constant}, one has
 \[
  z(\varphi(t,\cdot;u_1,g)-\varphi(t,\cdot;u_2,g))=N,
 \]
for any $t\in\mathbb{R}$, $(u_1,g), (u_2,g)\in M$. Therefore, $\varphi(t,\cdot;u_1,g)-\varphi(t,\cdot;u_2,g)$ has only simple zeros on $S^1$ for all $t\in\mathbb{R}$. Particularly, $u_1-u_2$ has only simple zeros on $S^1$. By Lemma \ref{order}, $u_1(x_0)-u_2(x_0)\neq 0$, where $x_0$ as defined in Lemma \ref{order}.

For such $x_0$,  we define the following mapping:
\begin{equation}\label{embeding-map}
\bar h:M\longrightarrow \mathbb{R}\times H(f);(u,g)\longmapsto (u(x_0),g).
\end{equation}
Clearly, $h$ is continuous injective and onto $\bar M=\bar h(M)\subset \mathbb{R}\times H(f)$.
Thus, $\Pi_{t}$ naturally induces a (skew-product) flow $\bar{\Pi}_t$ on $\bar{M}$ as:
\begin{equation}\label{topological-conju}
\bar{\Pi}_t(h(u,g))\triangleq \bar h(\varphi(t,\cdot;u,g)(x_0),g\cdot t)\,\,\,\text{ for any }\bar h(u,g)\in \bar{M}.
\end{equation}
We show that the map $(\bar h|_{_{M}})^{-1}$ is also continuous from $\bar{M}$ to $M$. Indeed, let $\bar h(u^n,g^n)\to \bar h(u,g)$ in $\bar{M}$ (that is, $(u^n(x_0),g^n)\to (u(x_0),g)$ with $g^n\to g$ in $H(f)$). By the compactness of $M$,
one may assume without loss of generality that $(u^n,g^n)\to (w,g)\in M$. This then implies that
$u(x_0)=v(x_0)$. Recall that $(u,g),(v,g)\in M$ with $g\in H(f)$. Suppose that $u\ne v$. Then { Lemma} \ref{center-constant} implies that $u-v$ possesses only simple zeros, a contradiction. Consequently, $u=v$, and hence,  $(u^n,g^n)\to (u,g)\in M$. Thus, we have proved  $(h|_{_{M}})^{-1}$ is continuous from $\bar{M}$ to $M$. By virtue of \eqref{topological-conju}, $(M,\Pi_{t})$ is topologically conjugate to the flow $(\bar{M},\bar{\Pi}_t)$ on $\mathbb{R}\times H(f)$.
\end{proof}

\subsection{Structure of $\Omega$-limit sets}

In this section, we study the structure of $\omega$-limit set of \eqref{equation-lim2} and prove Theorem \ref{structure-thm}.

\begin{proof} [Proof of Theorem \ref{structure-thm}]
It can be proved by the similar arguments as those in
  \cite[Theorem 5.3]{SWZ3}, {while its proof involves \cite[Lemma 3.7, Lemma 4.1, Corollary 4.2, Lemma 4.3, Lemma 5.4]{SWZ3}. For the clarity, we will check that all the lemmas and corollary used for showing \cite[Theorem 5.3]{SWZ3} are also valid in the current situation.}

First, in view of the result in Theorem \ref{a-a-circle-flow}, by using similar deductions as those in \cite[Lemma 3.7]{SWZ3} one can immediately get
{\it
\begin{itemize}
\item[\rm{(i)}]
Let $M_1, M_2\subset \Omega$ be two minimal sets with $\Sigma M_1\cap M_2=\emptyset$. Then, there exists an integer $N\in \mathbb{N}$ such that \begin{equation}\label{constant-0}
z(\varphi(t,\cdot;\sigma_{a_1}u_1,g)-\varphi(t,\cdot;\sigma_{a_2}u_2,g))=N,
\end{equation}
for any $t\in \mathbb{R}$, $g\in H(f)$, $(u_i,g)\in M_i$ and $a_i\in S^1$, $i=1,2$.
\end{itemize}
}

Second,
hereafter we always write
\begin{equation}\label{E:Induced-Omega}
\tilde \Omega=\{([u],g)|(u,g)\in \Omega\}.
\end{equation}
Then, following the arguments of \cite[Lemma 4.1]{SWZ3}, we have
{\it\begin{itemize}
\item[\rm{(ii)}] $\tilde \Omega=\omega([u_0],g_0)$, where
$$\omega([u_0],g_0)=\{([u],g)\, |\, \text{ there exists }t_n\to \infty \text{ such that }\tilde\Pi_{t_n}([u_0],g_0)\to ([u],g)\}.$$
Particularly, if $\tilde \Omega=\tilde M$ is a minimal set of $\tilde\Pi_{t}$, then there is a minimal set $M\subset X\times H(f)$ ($M\subset \Omega$) such that $\tilde M=\{([u],g)|(u,g)\in M\}$.
\end{itemize}
}

Third,
let $\tilde M_1$, $\tilde M_2\subset \tilde \O$ be two minimal sets of $\tilde\Pi_{t}$ and $M_1,M_2\subset \Omega$ be two minimal sets of $\Pi_{t}$ such that $\tilde M_i=\{([u_i],g)|(u_i,g)\in M_i\}$ ($i=1,2$). Define
\begin{eqnarray}\label{E:min-max}
 \begin{split}
   m_i(g):=\min\{m(u_i)|(u_i,g)\in M_i\cap p^{-1}(g)\},\\
   M_i(g):=\max\{m(u_i)|(u_i,g)\in M_i\cap p^{-1}(g)\}
 \end{split}
 \end{eqnarray}
 for $i=1,2$.  By the arguments of  \cite[Lemma 4.3 (ii)]{SWZ3}, we have
 {\it
 \begin{itemize}
   \item[{ \rm (iii)}] $[m_1(g),M_1(g)]\cap[m_2(g),M_2(g)]=\emptyset$ for all $g\in H(f)$;
   \item[{ \rm (iv)}] If $m_2(\tilde g)>M_1(\tilde g)$ for some $\tilde g\in H(f)$, then there exists $\delta>0$ such that  $m_2(g)>M_1(g)+\delta$ for all $g\in H(f)$.
\end{itemize}
}

 Fourth, following the arguments of  \cite[Lemma 5.4]{SWZ3}, we have that
{\it

\begin{itemize}
\item[\rm{(v)}] $\tilde\Omega$ contains at most two minimal sets of $\tilde \Pi_{t}$; and moreover, one of the following  three alternatives must occur:
\begin{itemize}
\item[{ \rm (v-a)}] $\tilde \O$ is a minimal set of $\tilde \Pi_{t}$;
\item[{ \rm (v-b)}] $\tilde \O=\tilde M_1\cup \tilde M_{11}$, where $\tilde M_1$ is minimal, $\tilde M_{11}\neq \emptyset$, $\tilde M_{11}$ connects $\tilde M_1$ in the sense that if $([u_{11}],g)\in \tilde M_{11}$, then $\omega([u_{11}],g)\cap \tilde M_1\neq \emptyset$, and $\alpha ([u_{11}],g)\cap \tilde M_1\neq \emptyset$;
\item[{ \rm (v-c)}] $\tilde \O=\tilde M_1\cup \tilde M_2\cup \tilde M_{12}$, where $\tilde M_1$, $\tilde M_2$ are minimal sets, $\tilde M_{12}\neq \emptyset$ and connects $\tilde M_1$, $\tilde M_2$ in the sense that if $([u_{12}],g)\in \tilde M_{12}$, then $\omega([u_{12}],g)\cap(\tilde M_1\cup \tilde M_2)\neq\emptyset$ and $\alpha([u_{12}],g)\cap(\tilde M_1\cup \tilde M_2)\neq\emptyset$.
\end{itemize}
\end{itemize}
}

Thus, to get Theorem \ref{structure-thm} we use the deductions as in \cite[Theorem 5.3]{SWZ3} in the following.

{Observe that $\tilde \O=\{([u],g)|(u,g)\in \Omega\}$, in the case (v-a), one has $\tilde \O=\tilde M$; and hence, (ii) implies that there is a minimal set $M\subset \O$ such that $\tilde M=\{([u],g)|(u,g)\in M\}$. Suppose that there is a point $(u_*,g)\in \O$, but $(u_*,g)\notin \Sigma M$. Then $u_*\neq \sigma_a u$ for any $a\in S^1$ and $(u,g)\in M$, which means that $([u_*],g)\notin \tilde M$, a contradiction to $([u_*],g)\in \tilde \O=\tilde M$. Thus, $\O\subset \Sigma M$.

When (v-b) holds, that is, $\tilde \O=\tilde M_1\cup \tilde M_{11}$, where $\tilde M_1$ is a minimal set of $\tilde \Pi_{t}$ and $\tilde M_{11}\neq \emptyset$. Again by using (ii), one can choose a minimal set $M_1\subset \O$ such that $\tilde M_1=\{([u],g)|(u,g)\in M_1\}$. Let $M_{11}=\O\setminus \Sigma M_1$. Then it is easy to see that $\tilde M_{11}=\{([u],g)|(u,g)\in M_{11}\}$; and moreover, there is no minimal set in $M_{11}$.
Hence, one can assert that both $\Sigma M_1\cap \omega(u_{11},g)\ne\emptyset$ and $\Sigma M_1\cap \alpha(u_{11},g)\ne\emptyset$. In fact, suppose on the contrary that $\Sigma M_1\cap \omega(u_{11},g)=\emptyset$. Then one can find a minimal set $M_2\subset \omega(u_{11},g)$. Therefore, $M_2\cap \Sigma M_1=\emptyset$, that is, $\Sigma M_2\cap \Sigma M_1=\emptyset$. Let $\tilde M_2=\{([u],g)|(u,g)\in M_2\}$, then $\tilde M_2\neq \tilde M_1$ is also a minimal set of $\tilde\Pi_{t}$ contained in $\tilde \Omega$, a contradiction. Thus, we have proved (2). Similarly, we can also prove (3) as long as (v-c) holds.}
\end{proof}

\section{Asymptotic dynamics of semilinear heat equations with Neumann/Dirichelt boundary condition}

In this section, we study the asymptotic dynamics of \eqref{equation-1} with Neumann boundary condition \eqref{neumann-bc}
or Dirichlet boundary condition \eqref{dirichlet-bc}, and prove Theorems \ref{Ne-imbed} and \ref{Di-imbed}.

\subsection{Neumann boundary condition}
\label{Ne-convert}

In this subsection, we consider the minimal set generated by the skew-product semiflow of \eqref{Ne-skew-product} and prove Theorem \ref{Ne-imbed}.

 \begin{proof} [Proof of Theorem \ref{Ne-imbed}]
 Let $M\subset X_N\times H(f)$ be a minimal set of $\Pi_t^N$. For any $(u_0,g)\in M$, by the regularity of parabolic equations,
 $u_0\in C^2[0,L]$.  Let
 \begin{equation*}
 \tilde u_0(x)=\begin{cases}
 u_0(x),\quad x\in [0,L]\cr
 u_0(-x),\quad x\in [-L,0].
 \end{cases}
\end{equation*}
Then $\tilde u_0\in C^2([-L,L])$, $\tilde u_0(-L)=\tilde u_0(L)$,
$\tilde u_0^{'}(-L)=\tilde u_0^{'}(L)=0$, and $\tilde u_0(x)=\tilde u_0(-x)$ for $x\in [-L,L]$.
Let $\tilde u(t,x)=\tilde\phi(t,x;\tilde u_0,g)$ be the solution of
\begin{equation}
\label{neumann-extended-eq-bc}
\begin{cases}
u_t=u_{xx}+g(t,u,u_x),\quad -L<x<L\cr
u(t,-L)=u(t,L),\,\, u_x(t,-L)=u_x(t,L)
\end{cases}
\end{equation}
with $\tilde u(0,x)=\tilde u_0(x)$. By {\bf (HNB)}, $\tilde u(t,x)=\tilde u(t,-x)$ and then
$\tilde u_x(t,-L)=\tilde u_x(t,L)=\tilde u_x(t,0)=0$. This implies that
$$
\phi^N(t,x;u_0,g)=\tilde\phi(t,x;\tilde u_0,g)\quad \forall\, t\in\RR,\,\, x\in [0,L].
$$

Let
$$
\tilde M=\{(\tilde u_0,g)\,|\, (u_0,g)\in M\}.
$$
$\tilde M$ is a minimal set of the skew-product semiflow generated by \eqref{neumann-extended-eq-bc}.
Recall that
$$
M_0^N=\{(u_0(0),g)\,|\, (u_0,g)\in M\}.
$$
By { Lemma} \ref{center-constant}, the mapping $M\ni(u_0,g)\to (u_0(0),g)\in M_0^N$ is a continuous bijection,
and $\Pi_t^N|_M$ is conjugate to $\pi_t^N:M_0^N\to M_0^N$, where
$$
\pi_t^N(u_0(0),g)=(\phi^N(t,0;u_0,g),g\cdot t)\quad \forall\, (u_0,g)\in M.
$$
Theorem \ref{Ne-imbed} is thus proved.
 \end{proof}

\subsection{Dirichlet boundary condition}
\label{Di-convert}

In this subsection, we consider the minimal set generated by the skew-product semiflow of \eqref{Di-skew-product} and prove Theorem \ref{Di-imbed}.

\begin{proof}[Proof of Theorem \ref{Di-imbed}]
First, we assume that {\bf (HDB1)} holds.  Let $M\subset X_D\times H(f)$ be a minimal set of $\Pi_t^D$. For any $(u_0,g)\in M$,
let
$$
\tilde u_0(x)=\begin{cases}
u_0(x),\quad x\in [0,L]\cr
-u_0(-x),\quad x\in [-L,0].
\end{cases}
$$
By the regularity of parabolic equations,
$u(t,x)=\phi^D(t,x;u_0,g)$ is $C^1$ in $t\in\RR$ and $C^2$ in $x\in [0,L]$. Note that
$u(t,0)=u(t,L)=0$ for $t\in\RR$. Hence $u_t(t,0)=u_t(t,L)=0$.
This together with {\bf (HDB1)} implies that
\begin{align*}
0=u_t(t,0)&=u_{xx}(t,0)+f(t,u(t,0),u_x(t,0))\\
&=-u_{xx}(t,0)-f(t,u(t,0),u_x(t,0))\\
&=-u_{xx}(t,0)+f(t,-u(t,0),u_x(t,0)).
\end{align*}
Hence $u_{xx}(t,0)=-u_{xx}(t,0)=0$. Similarly, $u_{xx}(t,L)=0$.
In particular,  we have
$$
\tilde u_0^{''}(0-)=\tilde u_0^{''}(0+)=0,\quad \tilde u_0^{''}(-L+)=\tilde u_0^{''}(L)=0.
$$
Therefore, $\tilde u_0\in C^2[-L,L]$ and
$$
\tilde u_0(-L)=\tilde u_0(0)=\tilde u_0(L)=0,\quad \tilde u_0^{'}(-L)=\tilde u_0^{'}(L).
$$

Let $\tilde u(t,x):=\tilde\phi(t,x;\tilde u_0,g)$ be the solution of
\begin{equation}
\label{dirichlet-extended-eq-bc}
\begin{cases}
u_t=u_{xx}+g(t,u,u_x),\quad -L<x<L\cr
u(t,-L)=u(t,L),\,\, u_x(t,-L)=u_x(t,L)
\end{cases}
\end{equation}
with $\tilde u(0,x)=\tilde u_0(x)$.
By {\bf (HDB1)}, $\tilde u(t,x)=-\tilde u(t,-x)$. Hence $\tilde u(t,-L)=\tilde u(t,L)=-\tilde u(t,-L)=0$
and $\tilde u(t,0)=0$. This implies that
$$
u(t,x)=\tilde u(t,x),\quad x\in [0,L].
$$

Let
$$
\tilde M=\{(\tilde u_0,g)\,|\, (u_0,g)\in M\}.
$$
Then $\tilde M$ is a minimal set of the skew-product semiflow generated by \eqref{dirichlet-extended-eq-bc}.
Recall that
$$
M_0^D=\{(u_0^{'}(0),g)\,|\, (u_0,g)\in M\}.
$$
By { Lemma} \ref{center-constant}, the mapping $M\ni(u_0,g)\to (u_0(0),g)\in M_0^D$ is a continuous bijection,
and $\Pi_t^D|_M$ is conjugate to $\pi_t^D:M_0^D\to M_0^D$, where
$$
\pi_t^D(u_0(0),g)=(\phi_x(t,0;u_0,g),g\cdot t),\quad \forall\, (u_0,g)\in M.
$$
Theorem \ref{Di-imbed} is thus proved when {\bf (HDB1)} holds.

Next, we assume that {\bf (HDB2)} holds and $u_0\ge 0$ for any $(u_0,g)\in M$.
Let
$$
\tilde f(t,u,p)=\begin{cases}
f(t,u,p),\quad u\ge 0\cr
-f(t,-u,p),\quad u<0.
\end{cases}
$$
Then $\tilde f$ is $C^1$. Since $u_0\ge 0$ for any $(u_0,g)\in M$, we can replace $f$ by $\tilde f$ and get
$M\subset X_D\times H(\tilde f)$. It then follows from the above arguments that
the mapping $M\ni(u_0,g)\to (u_0(0),g)\in M_0^D$ is a continuous bijection,
and $\Pi_t^D|_M$ is conjugate to $\pi_t^D:M_0^D\to M_0^D$, where
$$
\pi_t^D(u_0(0),g)=(\phi_x(t,0;u_0,g),g\cdot t)\quad \forall\, (u_0,g)\in M.
$$
This completes the proof of Theorem \ref{Di-imbed}.
\end{proof}

\section{An example for quasi-periodic case}
In this section, we give an example that to show that even for quasi-periodic case, quasi-periodically-forced circle flow may not always induced by equation \eqref{equation-1}+\eqref{periodic-bc}(the example is from Fink \cite[Example 12.5]{Fink} and also used in \cite{ShenYi-TAMS95,Shen1998}). Consider a differential equation on the torus
\begin{equation}\label{ODE-example}
\dot{x} = f (t,x)
\end{equation}
where $f(t + 1, x ) = f (t,x + 1 ) = f(t, x)$. Let $x ( t , \eta )$ be the solution of \eqref{ODE-example} with $x ( 0 , \eta ) = \eta .$ Define the Poincar\'{e} map $\psi: \eta \mapsto x ( 1 , \eta )$. When the rotation number $\rho$ of $\psi$ is irrational, then the $\omega$-limit set $\omega_{\psi}(\eta)$ of
$\left\{ \psi ^ { n } ( \eta ) \text { mod } 1 , n = 1,2 , \ldots \right\}$ is either $[0,1]$ or is a cantor set (see \cite{Fink}).

Now, let $f$ in \eqref{ODE-example} be such that $\omega_{\psi}(\eta)$ is a cantor set. The following equation
\begin{equation}\label{induced-quasi}
\dot{x} = f (t, x + \rho t) - \rho
\end{equation}
is quasi-periodic dependent on $t$ with frequencies 1 and $\rho .$ It
is shown by Fink \cite{Fink} that equation \eqref{induced-quasi} admits a bounded
solution but no almost periodic solution, which means the skew-product flow generated by \eqref{induced-quasi} admits a non-almost periodic almost automorphic minimal set (see \cite[Example 3.2]{Shen1998}). Therefore, any minimal set of \eqref{induced-quasi} cannot even be embedded into a minimal set of some almost periodically-forced circle flow.

\section*{Acknowlegement}
Dun Zhou would like to thank the Chinese Scholarship Council (201906845011) for its financial support during his overseas visit and express his gratitude to the Department of Mathematics and Statistics of Auburn University for  its hospitality.


\begin{thebibliography}{10}
\bibitem{Amann}
H. Amann, \rm{Existence and regularity for semilinear parabolic evolution equations}, Ann. Scuola Norm. Sup. Pisa Cl. Sci. (4) \textbf{11} (1984), no. 4, 593-676.
\bibitem{2038390}
S. Angenent, \rm{The zero set of a solution of a parabolic equation}, J. Reine
 Angew. Math. \textbf{390} (1988), 79--96.

\bibitem{Angenent1988}
S. Angenent and B. Fiedler, \rm{The dynamics of rotating waves in scalar reaction diffusion equations}, Trans. Amer. Math. Soc. \textbf{307} (1988), 545--568.
\bibitem{P.Bates}
P. W. Bates and C. K. Jones, \rm{Invariant manifolds for semilinear partial differential equations},  Dynamics reported, Vol. 2 (1989), 1-38.

\bibitem{BPS}
P. Brunovsky, P. Pol\'{a}\v{c}ik and B. Sandstede, \rm{Convergence in general periodic parabolic equations in one space dimension}, Nonlinear Analysis, Theory, Method and Applicationions. \textbf{18} (1992),
no. 3, 209-215.
\bibitem{CCH}
M. Chen, X. Chen and J.K. Hale, \rm{Structural stability for time-periodic one-dimensional parabolic equations},J. Differential Equations \textbf{96} (1992), no. 2, 355-418.
\bibitem{Chen98}
X. Chen, \rm{A strong unique continuation theorem for parabolic equations}, Math. Ann. \textbf{311} (1998), 603--630.

\bibitem{Chen1989160}
X. Chen and
 H.~Matano, \rm{Convergence, asymptotic periodicity, and
  finite-point blow-up in one-dimensional semilinear heat equations}, J.
  Diff. Eqns. \textbf{78} (1989), 160-190.

\bibitem{Chen-P}
X. Chen and P. Pol\'{a}\v{c}ik, \rm{Gradient-like structure and Morse decompositions for time-periodic one-dimensional parabolic equations}, J. Dynam. Differential Equations \textbf{7} (1995), no. 1, 73-107.
\bibitem{Chow1994}
S.-N. Chow, and H. Leiva,\rm{Dynamical spectrum for time dependent linear systems in Banach spaces}, Japan J. Indust. Appl. Math., \textbf{11} (1994),379-415.

\bibitem{CLM1994}
S.-N. Chow, K. Lu, and J.~Mallet-Paret, \rm{Floquet Theory for Parabolic Differential Equations}, J. Diff. Eqns. \textbf{109} (1994), 147-200.
\bibitem{Chow1995}
S.-N. Chow, K. Lu, and J.~Mallet-Paret, \rm{Floquet bundles for scalar parabolic equations}, Arch.
  Ration. Mech. Anal., \textbf{129} (1995), 245-304.

\bibitem{Chow1991}
S.-N. Chow, X. Lin, and K. Lu, \rm{Smooth invariant foliations in infinite-dimensional spaces}, J. Diff. Eqns. \textbf{94} (1991), 266-291.

\bibitem{Chow1994-2}
S.-N. Chow and Y. Yi, \rm{Center manifold and stability for skew-product flows}, J. Dynam. Differential Equations, \textbf{6} (1994), 543-582.

\bibitem{CRo} R. Czaja and C. Rocha, Transversality in scalar reaction-diffusion equations on a circle, J. Differential Equations. \textbf{245} (2008), 692--721.

\bibitem{Fiedler1}
B. Fiedler, \rm{Discrete Ljapunov functionals and omega-limit sets}, RAIRO Modl. Math. Anal.
Numr. \textbf{23} (1989), 415-431.

\bibitem{Fiedler}
B.~Fiedler and J.~Mallet-Paret, \rm{A Poincar\'{e}-Bendixson theorem for
  scalar reaction diffusion equations}, Arch.
  Ration. Mech. Anal. \textbf{107} (1989), 325-345.

\bibitem{Fink}
A. Fink, \rm{Almost periodic differential equations}, Lecture Notes in Mathematics, Vol.~377, Springer 1974.

\bibitem{Hen}
 D. Henry, \rm{Geometric Theory of Semilinear Parabolic Equations}, Lecture Notes Mathematics Vol.840,
New York, Springer, 1981.

\bibitem{Hess}
P. Hess, \rm{Periodic-parabolic boundary value problems and positivity}, Pitman Research Notes in Mathematics Series, 247.

\bibitem{HuYi} W. Huang and Y. Yi, \rm{Almost periodically forced circle flows},
J. Funct. Anal. 257(2009), 832-902.

\bibitem{JR1} R. Joly and G. Raugel, \rm{Generic hyperbolicity of equilibria and periodic orbits of the parabolic equation on the circle}, Trans. Amer. Math. Soc. \textbf{362} (2010), 5189-5211.

\bibitem{JR2} R. Joly and G. Raugel, \rm{Generic Morse-Smale property for the parabolic equation on the circle}, Ann. Inst. H. Poincare, Anal. Non Lineaire \textbf{27} (2010), 1397-1440.


\bibitem{Massatt1986}
P.~Massatt, \rm{The Convergence of solutions of scalar reaction diffusion
  equations with convection to periodic solutions}, Preprint, 1986.

\bibitem{Matano78}
H.~Matano, \rm{Convergence of solutions of one-dimensional semilinear parabolic equations},
J. Math. Kyoto Univ. \textbf{18} (1978), no. 2, 221-227.

\bibitem{H.MATANO:1982}
H.~Matano, \rm{Nonincrease of the lap-number of a solution for a
  one-dimensional semi-linear parabolic equation}, J. Fac. Sci. Univ. Tokyo
  Sect.IA. \textbf{29} (1982), 401-441.

\bibitem{Matano}
H.~Matano, \rm{Asymptotic behavior of solutions of semilinear heat equations on
  $S^1$}. Nonlinear Diffusion Equations and Their Equilibrium States II, Math. Sci. Res. Inst. Publ. \textbf{13}  (1988), 139-162.

\bibitem{Mierczynski}
J. Mierczy\'nski and W. Shen, \rm{Spectral theory for random and nonautonomous parabolic equations and applications}, Chapman \& Hall/CRC Monographs and Surveys in Pure and Applied Mathematics, 139. CRC Press, 2008.


\bibitem{Sacker1978}
R. Sacker and G. Sell, \rm{A spectral theory for linear differential systems}, J.  Diff. Eqns. \textbf{27} (1978), 320-358.

\bibitem{Sacker1991}
R. Sacker and G. Sell, \rm{Dichotomies for linear evolutionary equations in Banach spaces}, J. Diff. Eqns. \textbf{113} (1991), 17-67.

\bibitem{SF1}
B.~Sandstede and B.~Fiedler, \rm{Dynamics of periodically forced parabolic
  equations on the circle}, Ergodic Theory and Dynamical Systems \textbf{12}
  (1992), 559--571.

\bibitem{Sell} G. Sell, Topological dynamics and ordinary differential equations,
 Van Nostrand Reinhold Co., Van Nostrand Reinhold Mathematical Studies, No.33. 1971.

\bibitem{SWZ2}
W. Shen, Y. Wang and D. Zhou, \rm{Structure of $\omega$-limit sets for almost-periodic parabolic equations on $S^1$ with reflection symmetry}, J.  Diff. Eqns. \textbf{267} (2016), 6633-6667.

\bibitem{SWZ3}
W. Shen, Y. Wang and D. Zhou, \rm{Long-time behavior of almost periodically forced parabolic equations on the circle}
J. Differential Equations \textbf{266} (2019), 1377-1413.

\bibitem{SWZ}
W. Shen, Y. Wang and D. Zhou, \rm{Almost automorphically and almost periodically forced circle flows of almost periodic parabolic equations on $S^1$}, J. Dyn. Diff. Eqns. 2019, 43p, https://doi.org/10.1007/s10884-019-09786-7.


\bibitem{Shen1995114}
W. Shen and Y. Yi, \rm{Dynamics of almost periodic scalar parabolic
  equations}, J.  Diff. Eqns. \textbf{122} (1995), 114-136.

\bibitem{ShenYi-2}
W. Shen and Y. Yi, \rm{Asymptotic almost periodicity of scalar parabolic equations with almost
periodic time dependence}, J.  Diff. Eqns. \textbf{122} (1995), 373-397.

\bibitem{ShenYi-TAMS95}
W. Shen and Y. Yi, \rm{On Minimal Sets of Scalar Parabolic Equations with Skew-product
Structures}, Tran. Amer. Math. Soc., \textbf{347} (1995), 4413-4431.

\bibitem{ShenYi-JDDE96}
W. Shen and Y. Yi, \rm{Ergodicity of minimal sets in scalar parabolic equations}, J. Dyn. Diff. Eqns.
\textbf{8} (1996), 299-323.

\bibitem{Shen1998}
W. Shen and Y. Yi, \rm{Almost automorphic and almost periodic dynamics in
  skew-product semiflows}, Mem. Amer. Math. Soc.
  \textbf{136} (1998), no.~647.


\bibitem{Te}
I. Tere\v{s}\v{c}\'{a}k, \rm{Dynamical systems with discrete Lyapunov functionals}, Ph.D. thesis,
Comenius University (1994).

\end{thebibliography}
\end{document}